\newtheorem{theorem}{Theorem}[section]
\newtheorem{lemma}{Lemma}[section]
\newtheorem{cor}{Corollary}[section]
\numberwithin{equation}{section}
\theoremstyle{definition}
\theoremstyle{remark}
\begin{document}
\title{On an inequality suggested by Littlewood}
\author{Peng Gao}
\address{Division of Mathematical Sciences, School of Physical and Mathematical Sciences,
Nanyang Technological University, 637371 Singapore}
\email{penggao@ntu.edu.sg}
\subjclass[2000]{Primary 26D15} \keywords{Inequalities}


\begin{abstract}
  We study an inequality suggested by Littlewood, our result
  refines a result of Bennett.
\end{abstract}

\maketitle
\section{Introduction}
\label{sec 1} \setcounter{equation}{0}

  In connection with work on the general theory of orthogonal
  series, Littlewood \cite{Littlewood} raised some problems concerning elementary inequalities for infinite
  series. One of them asks to decide whether an absolute constant $K$ exists such that for any non-negative sequence $(a_n)$
with $A_n=\sum^n_{k=1}a_k$,
\begin{equation}
\label{1}
   \sum^{\infty}_{n=1}a_nA^2_n\left ( \sum^{\infty}_{k=n}a_k^{3/2} \right )^2 \leq K
   \sum^{\infty}_{n=1}a^2_nA^4_n.
\end{equation}

   The above problem was solved by Bennett \cite{B1}, who proved the
   following more general result:
\begin{theorem}[{\cite[Theorem 4]{B1}}]
\label{thm1}
   Let $p \geq 1, q>0, r >0$ satisfying $(p(q+r)-q)/p \geq 1$ be fixed. Let
   $K(p,q,r)$ be the best possible constant
   such that for any non-negative sequence $(a_n)$ with $A_n=\sum^n_{k=1}a_k$,
\begin{equation}
\label{2}
   \sum^{\infty}_{n=1}a^p_nA^q_n\left ( \sum^{\infty}_{k=n}a_k^{1+p/q} \right )^r \leq
  K(p,q,r)
   \sum^{\infty}_{n=1}\left (a^p_nA^q_n \right )^{1+r/q}.
\end{equation}
   Then
\begin{equation*}
    K(p,q,r) \leq \left ( \frac {p(q+r)-q}{p} \right )^r.
\end{equation*}
\end{theorem}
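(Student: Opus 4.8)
The plan is to bound the left-hand side of \eqref{2} by first separating the tail factor $S_n:=\sum_{k\ge n}a_k^{1+p/q}$ from the weight $a_n^pA_n^q$ by a single application of H\"older's inequality, and then to absorb that tail factor into the right-hand side by means of a Copson-type estimate. Throughout I write $t:=q+r$, set $\alpha:=\big(p(q+r)-q\big)/p=t-q/p$ (so that the hypothesis $(p(q+r)-q)/p\ge1$ is exactly $\alpha\ge1$), and put $W_n:=a_n^{p/q}A_n$, so that $W_n^{t}=(a_n^pA_n^q)^{1+r/q}$ and the right-hand side of \eqref{2} equals $\sum_{n}W_n^{t}$. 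One may assume $\sum_n W_n^t<\infty$, since otherwise there is nothing to prove; by a standard truncation it then suffices to treat finitely supported sequences, for which $S_n$ is a finite sum vanishing for large $n$.

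Applying H\"older's inequality to $\sum_n (a_n^pA_n^q)\,S_n^{r}$ with the conjugate exponents $\sigma=(q+r)/q$ and $\sigma'=(q+r)/r$ yields
\begin{equation*}
\sum_{n=1}^{\infty}a_n^pA_n^qS_n^{r}\le\Big(\sum_{n=1}^{\infty}(a_n^pA_n^q)^{1+r/q}\Big)^{q/(q+r)}\Big(\sum_{n=1}^{\infty}S_n^{q+r}\Big)^{r/(q+r)}.
\end{equation*}
Hence the theorem follows once I establish the Copson-type estimate
\begin{equation*}
\sum_{n=1}^{\infty}S_n^{t}\le\alpha^{t}\sum_{n=1}^{\infty}W_n^{t},\qquad t=q+r,
\end{equation*}
because substituting this bound and simplifying the exponents ($tr/(q+r)=r$) collapses the product to $\alpha^{r}\sum_n W_n^{t}$, which is precisely $\big((p(q+r)-q)/p\big)^r$ times the right-hand side of \eqref{2}. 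This H\"older step is efficient rather than wasteful: for the power-law near-extremals $A_n\sim n^{\gamma}$ one checks that $S_n^{t}$ is proportional to $(a_n^pA_n^q)^{1+r/q}$, so H\"older is asymptotically an equality there and the entire cost of the argument is carried by the Copson estimate.

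To prove the Copson estimate I would exploit the identity $S_n-S_{n+1}=a_n^{1+p/q}=W_n\,(a_n/A_n)$, which exhibits $S_n=\sum_{k\ge n}W_k\,(a_k/A_k)$ as a tail average of the $W_k$ against the weights $a_k/A_k$ built from $A_n=\sum_{k\le n}a_k$. The standard route is summation by parts: using convexity of $u\mapsto u^{t}$ (valid since $t=q+r\ge 1+q/p>1$ by hypothesis) to write $S_n^{t}-S_{n+1}^{t}\le t\,S_n^{t-1}a_n^{1+p/q}$, then resumming and estimating the inner tail sum $S_n$ by H\"older against an optimally chosen weight. Elementary inequalities such as the superadditivity $A_k^{1+p/q}-A_{k-1}^{1+p/q}\ge a_k^{1+p/q}$ and the monotonicity of $A_n$ should feed the gain that turns the weights $a_k/A_k$ into the correction $-q/p$ appearing in $\alpha=t-q/p$; the role of the hypothesis $\alpha\ge1$ is exactly to keep the resulting Bernoulli-type inequality pointing in the correct direction.

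The main obstacle is this last Copson estimate: matching the constant $\alpha^{t}=(t-q/p)^{t}$ requires tracking the gain coming from the weights $a_k/A_k$ precisely, whereas a naive summation-by-parts bound yields only the weaker Copson constant $t^{t}$. Extracting the sharp $q/p$ correction is where the structural relation between $W_n=a_n^{p/q}A_n$ and the increments $a_n=A_n-A_{n-1}$ must be used in full, and it is the step I expect to demand the most care; by contrast the H\"older reduction and the convexity inequality are routine.
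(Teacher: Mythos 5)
There is a genuine gap: the ``Copson-type estimate'' on which your entire argument rests, namely
\begin{equation*}
\sum_{n=1}^{\infty}S_n^{t}\le\alpha^{t}\sum_{n=1}^{\infty}W_n^{t},\qquad t=q+r,\quad \alpha=\frac{p(q+r)-q}{p},
\end{equation*}
is false. Take $p=q=r=1$ (admissible, since $\alpha=1$) and $a=(2,1,0,0,\dots)$. Then $S_1=a_1^2+a_2^2=5$, $S_2=1$, so $\sum_n S_n^t=26$, while $W_1=a_1A_1=4$, $W_2=a_2A_2=3$, so $\alpha^t\sum_n W_n^t=25$; thus the claimed estimate reads $26\le 25$. (The theorem itself holds here: the left-hand side of \eqref{2} is $23\le 25$.) This is not a boundary artifact: both sides are continuous in $r$ for this fixed sequence, so the estimate also fails for all $r$ near $1$, e.g.\ $r=1.01$. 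Worse, the failure cannot be repaired by proving a weaker Copson bound and absorbing the loss elsewhere, because after your H\"older step the inequality you need, $\bigl(\sum_n W_n^t\bigr)^{q/t}\bigl(\sum_n S_n^t\bigr)^{r/t}\le\alpha^{r}\sum_n W_n^t$, is \emph{exactly equivalent} to the false estimate. Indeed, in the example above the H\"older product is $\sqrt{25\cdot 26}>25$, so your very first step already overshoots the target right-hand side: the initial global H\"older splitting is too lossy, and no subsequent argument can recover the constant. Your heuristic that H\"older is asymptotically sharp on power-law sequences does not help, since the worst case for the tail estimate is a short finitely supported sequence, not a power law.

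The structural reason for the failure is that Copson-type inequalities attain such constants only in their weighted form, as in Lemma \ref{lem1}, where the $d$-th power is multiplied by a factor $\lambda_n\Lambda_n^{-c}$; your unweighted tail version discards precisely this weight. This is also why the paper proceeds differently: Theorem \ref{thm1} is quoted from \cite{B1} (Bennett's proof, extended by Alzer to the stated range of parameters), and the paper's own machinery recovers it, via Theorem \ref{thm2} and Lemma \ref{reductionlem1}, by \emph{first} interchanging the order of summation as in \eqref{3.1}, so that the inner partial sum $\sum_{k\le n}a_k^pA_k^q$ appears with the weight $a_n^{1+p/q}$ still attached, and only \emph{then} applying H\"older's inequality and the weighted Copson inequality (Lemma \ref{lem1} with $\lambda_n=a_n$). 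Keeping the weights attached through the Copson application is what makes the constant $\bigl((p(q+r)-q)/p\bigr)^r$ attainable; your route severs them at the outset.
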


   The special case $p=1, q=r=2$ in \eqref{2} leads to inequality
   \eqref{1} with $K=4$ and Theorem \ref{thm1} implies that $K(p,q,r)$ is finite for any $p \geq 1, q>0, r>0$ satisfying $(p(q+r)-q)/p \geq 1$, a fact we
   shall use implicitly throughout this paper. We note that Bennett only
proved Theorem \ref{thm1} for $p, q, r \geq 1$ but as was pointed
out in \cite{Alzer}, Bennett's proof
   actually works for the $p, q, r$'s satisfying the condition in Theorem \ref{thm1}. Another proof of inequality \eqref{2}
   for the special case $r=q$ was provided by Bennett in \cite{Be3}
   and a close look at the proof there shows that it in fact can be used to establish Theorem
   \ref{thm1}.

   On setting $p = 2$ and $q=r= 1$ in \eqref{2}, and interchanging the order of summation on the left-hand side of \eqref{2},
   we deduce the following
\begin{equation}
\label{3}
   \sum^{\infty}_{n=1}a^3_n\sum^{n}_{k=1}a_k^{2}A_k \leq
   \frac {3}{2}
   \sum^{\infty}_{n=1}a^4_nA^2_n.
\end{equation}

   The constant in \eqref{3} was improved to be $2^{1/3}$ in \cite{G} and the following more general result was given in
   \cite{CTZ}:
\begin{theorem}[{\cite[Theorem 2]{CTZ}}]
   Let $p, q \geq 1, r>0$ be fixed satisfying $r(p-1) \leq 2(q-1)$. Set
\begin{equation*}
  \alpha=\frac {(p-1)(q+r)+p^2+1}{p+1}, \ \beta=\frac
  {2q+2r+p-1}{p+1},  \ \delta=\frac {q+r-1}{p+q+r}.
\end{equation*}
   Then for any non-negative sequence $(a_n)$ with $A_n=\sum^n_{k=1}a_k$,
\begin{equation}
\label{4}
    \sum^{\infty}_{n=1}a^p_n\sum^n_{k=1}a^q_kA^r_k \leq
    2^{\delta}\sum^{\infty}_{n=1}a^{\alpha}_{n}A^{\beta}_n.
\end{equation}
\end{theorem}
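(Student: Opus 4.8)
The plan is to derive inequality \eqref{4} from Theorem~\ref{thm1}, since the parameters $\alpha,\beta,\delta$ look engineered to be exactly what one obtains after matching Bennett's inequality \eqref{2} to the target.  First I would rewrite the left-hand side of \eqref{4} by interchanging the order of summation: writing $\sum_{n=1}^\infty a_n^p\sum_{k=1}^n a_k^q A_k^r = \sum_{k=1}^\infty a_k^q A_k^r\big(\sum_{n=k}^\infty a_n^p\big)$, so that the inner tail sum $\sum_{n=k}^\infty a_n^p$ is brought into a form comparable with the tail factor $\big(\sum_{k=n}^\infty a_k^{1+p/q}\big)^r$ appearing in \eqref{2}.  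The idea is to choose new exponents $(P,Q,R)$ for an application of Theorem~\ref{thm1} so that the summand $a_k^Q A_k^R$ and the tail $\sum a_k^{1+P/Q}$ reproduce the $a_k^q A_k^r$ and the inner power $p$ after the interchange, while the right-hand side $\sum (a_n^P A_n^Q)^{1+R/Q}$ collapses to $\sum a_n^\alpha A_n^\beta$.

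The key step is the bookkeeping that produces $\alpha,\beta,\delta$.  Matching the tail exponent requires $1+P/Q = p$, hence $P/Q = p-1$; matching the summand outside the tail requires the pair $(Q,R)$ in \eqref{2} to correspond to $(q,r)$ up to the weight, and matching the right-hand exponents gives $\alpha = P(1+R/Q)$, $\beta = Q(1+R/Q) = Q+R$, with $\delta = R/Q$ (or its analogue) read off as the exponent on the constant $2$.  Solving this small linear system for $P,Q,R$ in terms of $p,q,r$ should yield precisely $\delta=(q+r-1)/(p+q+r)$ together with the displayed $\alpha=((p-1)(q+r)+p^2+1)/(p+1)$ and $\beta=(2q+2r+p-1)/(p+1)$; the denominators $p+1$ and $p+q+r$ strongly suggest this is the outcome of exactly such an elimination.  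The constant on the right of \eqref{4} is then $\big(K(P,Q,R)\big)^{\text{something}}$, and applying the bound $K(P,Q,R)\le ((P(Q+R)-Q)/P)^R$ from Theorem~\ref{thm1}, one checks that this numerical factor simplifies to $2^\delta$.

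The main obstacle I anticipate is twofold.  The routine-looking algebra of inverting the relations $1+P/Q=p$, $\beta=Q+R$, $\alpha=P(1+R/Q)$ is in fact delicate because Theorem~\ref{thm1} is not homogeneous in the way \eqref{4} is: the passage from \eqref{2} to \eqref{4} almost certainly needs an intermediate application of Hölder's inequality (or a rearrangement/monotonicity argument on the partial sums $A_n$) to reconcile the convexity exponent $1+R/Q$ on the right of \eqref{2} with the single product $a_n^\alpha A_n^\beta$ on the right of \eqref{4}.  The hypothesis $r(p-1)\le 2(q-1)$ is surely the feasibility condition guaranteeing that the required Hölder exponents are $\ge 1$ and that $(P(Q+R)-Q)/P\ge 1$, so the hard part will be verifying that the chosen $(P,Q,R)$ genuinely satisfy the constraints of Theorem~\ref{thm1} and that the Hölder step produces exactly the constant $2^\delta$ rather than something larger.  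I would therefore spend most of the effort confirming that the constant tracks through as $2^\delta$ and that all side conditions reduce to $r(p-1)\le 2(q-1)$.
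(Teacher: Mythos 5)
Your plan cannot be carried through, and the obstruction is concrete. First, the structural matching with Theorem \ref{thm1} is overdetermined. After interchanging summation, the left-hand side of \eqref{4} is $\sum_{k}a_k^qA_k^r\bigl(\sum_{n\ge k}a_n^p\bigr)$, in which the tail sum appears to the \emph{first} power, whereas in \eqref{2} the tail carries the exponent $R$; so a direct match forces $R=1$, and then the identifications $P=q$, $Q=r$, $1+P/Q=p$ impose the relation $q=r(p-1)$ on the given parameters. The theorem's hypothesis $r(p-1)\le 2(q-1)$ is not this relation (take $p=2$, $q=3$, $r=1$: the hypothesis holds but $q\ne r(p-1)$), so for generic admissible $(p,q,r)$ there is no choice of $(P,Q,R)$ making \eqref{2} have the shape of \eqref{4}; one can check further that even the right-hand sides align only under the additional constraint $r(p-1)=2$. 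Second, and decisively, even at the one parameter point where everything does match ($p=3$, $q=2$, $r=1$, i.e.\ $P=2$, $Q=R=1$), Theorem \ref{thm1} yields the constant $(P(Q+R)-Q)/P=3/2$, while \eqref{4} asserts $2^{\delta}=2^{1/3}<3/2$. As the introduction of the paper explains, the constant $3/2$ in \eqref{3} is exactly what Bennett's theorem gives, and the whole point of \cite{G} and \cite{CTZ} is that they \emph{improve} it to $2^{1/3}$; a statement strictly stronger than Theorem \ref{thm1} cannot be a corollary of it, so no bookkeeping of exponents will make Bennett's bound ``simplify to $2^\delta$.''

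For context: this theorem is quoted in the paper from \cite{CTZ} and is not proved there, so there is no in-paper proof to compare against; but the known argument (the ``idea in \cite{G}'', which the present paper adapts in its proof of Lemma \ref{basiclem}) is of a genuinely different nature. One writes $a_k^qA_k^r=a_k\bigl(a_k^{(q-1)/\delta}A_k^{r/\delta}\bigr)^{\delta}$ with $\delta\le 1$, pulls the power $\delta$ outside the inner sum by concavity (H\"older with weights $a_k/A_n$), and then controls the resulting expression by partial summation together with mean-value-theorem estimates of differences $T_n-T_{n-1}$ and tail bounds of the type in Lemma \ref{lem2}. It is this mechanism, not an application of Theorem \ref{thm1}, that produces constants of the form $2^{\delta}$, and it is also where the hypothesis $r(p-1)\le 2(q-1)$ actually enters.
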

   Note that inequality \eqref{3} with constant
   $2^{1/3}$ corresponds to the case $p=3, q=2, r=1$ in \eqref{4}.
    In \cite{Z}, an even better constant was obtained but the proof there is
    incorrect. In \cite{Alzer}, \cite{CTZ} and \cite{Z}, results
    were also obtained concerning inequality \eqref{2} under the
    extra assumption that the sequence $(a_n)$ is non-decreasing.

  The exact value of $K(p,q,r)$ is not known in general. But note that $K(1,q,1)=1$ as it follows immediately from Theorem \ref{thm1}
  that $K(1,q,1) \leq 1$ while on the other hand on setting
  $a_1=1, a_n=0, n \geq 2$ in \eqref{2} that $K(1,q,1) \geq 1$.
  Therefore we may restrict our attention on \eqref{2} for $p, r$'s not
  both being $1$. It's our goal in this paper to improve the result in Theorem
\ref{thm1} in the following
\begin{theorem}
\label{thm2} Let $p \geq 1,  q>0 , r \geq 1$ be fixed with $p,r$
not both being $1$. Under the same notions of Theorem \ref{thm1},
inequality \eqref{2} holds when $q+r-q/p \geq 2$ with
\begin{equation*}
   K(p,q,r) \leq  K \left ( p\left (1+\frac {r-1}{q} \right ), q+r-1,
1 \right )\left ( \frac {p(q+r)-q}{p} \right )^{r-1}.
\end{equation*}
   When  $1 \leq q+r-q/p \leq 2$, inequality \eqref{2} holds with
\begin{equation*}
   K(p,q,r) \leq \left (K \left ( p\left (1+\frac {r-1}{q} \right ), q+r-1,
1 \right ) \right )^{r-\frac {p(r-1)}{pq+p(r-1)-q}} \left ( \frac
{p(q+r)-q}{p} \right )^{\frac {p(r-1)}{pq+p(r-1)-q}}.
\end{equation*}
   Moreover, for any $p \geq 1, q >0$,
\begin{equation}
\label{7}
   K(p,q,1) \leq \min_{\delta} \left ( \frac {p(q+1)-q}{p}, \ C(p,q, \delta)   \right
   ),
\end{equation}
   where
\begin{align}
\label{8}
   C(p,q, \delta)=\left ( \delta \left (1+\frac {p}{q(p-1)}\right )\left (1+\frac
   {1}{1/(p-1)+\delta(1+p/(q(p-1)))-1} \right) \right )^{\delta},
\end{align}
   and the minimum in \eqref{7} is taken over the $\delta$'s satisfying
\begin{align}
\label{9}
   \frac {q(p-1)}{p(q+1)-q} \leq \delta \leq 1.
\end{align}
\end{theorem}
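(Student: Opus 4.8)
The plan is to reduce the general $r\ge 1$ inequality to the already-understood case $r=1$, exploiting a fortunate matching of exponents, and then to recover the surplus powers of the tail sum by a convexity-plus-summation-by-parts argument interpolated against Bennett's Theorem~\ref{thm1}. Write $S_n=\sum_{k=n}^{\infty}a_k^{1+p/q}$, let $B=(p(q+r)-q)/p$ be Bennett's constant, and put $P=p(1+(r-1)/q)$, $Q=q+r-1$, $A=K(P,Q,1)$. First I would record the algebraic coincidence behind the choice of $P,Q$: one checks $P/Q=p/q$, so $1+P/Q=1+p/q$, and $(a_n^{P}A_n^{Q})^{1+1/Q}=(a_n^{p}A_n^{q})^{1+r/q}$. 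Hence the $r=1$ instance of \eqref{2} with parameters $(P,Q,1)$ reads, with no further work,
\begin{equation*}
  \sum_{n=1}^{\infty}a_n^{P}A_n^{Q}S_n\le A\sum_{n=1}^{\infty}\bigl(a_n^{p}A_n^{q}\bigr)^{1+r/q}.
\end{equation*}
Denote the two sides of \eqref{2} by $L$ and $R$ and the left-hand side above by $M$. All three of $L,M,R$ scale as $\lambda^{(p+q)(q+r)/q}$ under $a_n\mapsto\lambda a_n$, so any estimate of the form $L\le(\mathrm{const})\,M^{x}R^{1-x}$ is dimensionally consistent, and combined with $M\le AR$ it yields a constant of interpolated shape $A^{x}B^{\,c}$, matching the announced $A^{\alpha}B^{\beta}$ with $\alpha+\beta=r$.

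The heart of the matter is that $S_n$ enters $L$ to the power $r$ but $M$ only to the first power, and the ratio $S_n/(a_n^{p/q}A_n)$ is \emph{not} bounded termwise (it blows up for small $n$), so no pointwise comparison of $L$ with $M$ or with $M^{x}R^{1-x}$ can succeed. To trade the excess powers of the tail honestly I would use summation by parts: with $W_n=\sum_{k=1}^{n}a_k^{p}A_k^{q}$ one has $L=\sum_{n}W_n\bigl(S_n^{r}-S_{n+1}^{r}\bigr)$, and then convexity of $t\mapsto t^{r}$ for $r\ge1$ gives $S_n^{r}-S_{n+1}^{r}\le r\,S_n^{r-1}a_n^{1+p/q}$, lowering the exponent of the tail from $r$ to $r-1$ at the cost of introducing the head sums $W_n$.

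The decisive step is then to control the head sums $W_n$, which (as the example $a_1$ large, $a_k$ small shows) \emph{cannot} be bounded pointwise, but only globally once they are inserted into Hölder's inequality; this is exactly where Bennett's constant $B=(p(q+r)-q)/p$ enters, through the discrete power-sum comparison for partial sums. Feeding the outcome into Hölder interpolates between the exponent $S_n^{1}$ (which regenerates $M$) and $S_n^{0}$ (which gives $R$). The threshold $B=2$ arises because the Hölder exponent $x=r-\theta$, with $\theta=p(r-1)/(pq+p(r-1)-q)$, satisfies $x\le 1$ precisely when $B\le 2$ (equivalently $pQ\le p+q$); for $B\ge2$ one is forced to the endpoint $x=1$, giving the clean bound $A\,B^{r-1}$ of the first case, while for $1\le B\le2$ one optimizes and obtains $A^{r-\theta}B^{\theta}$, the two regimes agreeing continuously at $B=2$ since there $\theta=r-1$. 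I expect the main obstacle to lie precisely here: the convexity step costs a factor $r$ and so cannot be iterated naively (that would produce factorial constants rather than powers of $B$); the real content is to show that the weights $W_n$, once placed inside Hölder, conspire to produce exactly $B^{r-1}$ (respectively $B^{\theta}$) and nothing larger, and to verify that the admissible Hölder range is governed exactly by the dichotomy $B\gtrless 2$.

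For the final assertion \eqref{7}, where $r=1$, I would argue more directly. Interchanging the order of summation turns the left-hand side into $\sum_{k=1}^{\infty}a_k^{1+p/q}\sum_{n=1}^{k}a_n^{p}A_n^{q}$. The bound $(p(q+1)-q)/p$ is then simply Bennett's Theorem~\ref{thm1} specialized to $r=1$. To obtain the competing bound $C(p,q,\delta)$ of \eqref{8} I would split the inner partial sum against the outer tail by Hölder's inequality with a free parameter $\delta$, reduce the resulting pieces to the elementary comparisons $\sum_{n=1}^{k}a_nA_n^{s}\lessgtr A_k^{s+1}/(s+1)$, and finally minimize the constant over $\delta$; the lower endpoint $\delta=q(p-1)/(p(q+1)-q)$ in \eqref{9} should be exactly the value at which this estimate stops improving on Bennett's, which is why the minimum in \eqref{7} is restricted to that range.
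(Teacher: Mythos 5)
Your proposal assembles the right raw ingredients --- the algebraic coincidence that $P=p(1+(r-1)/q)$, $Q=q+r-1$ gives the same tail sum $\sum_{k\ge n}a_k^{1+p/q}$ and the same right-hand side $(a_n^pA_n^q)^{1+r/q}$, the partial-summation-plus-convexity step $S_n^r-S_{n+1}^r\le rS_n^{r-1}a_n^{1+p/q}$, and the dichotomy at $B=q+r-q/p=2$ --- but it has a genuine gap at exactly the point you flag yourself. You perform summation by parts first, at general $p$, producing the head sums $W_n=\sum_{k\le n}a_k^pA_k^q$, and then defer ``the real content'' (showing that the $W_n$, placed inside H\"older, produce exactly $B^{r-1}$ or $B^{\theta}$) to an unspecified global argument. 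That deferred step is the theorem; nothing in your outline closes it, and as you correctly observe, no pointwise bound on $W_n$ is available when $p>1$. The paper resolves this by reversing the order of operations: it applies H\"older \emph{first} (Lemma \ref{reductionlem}), writing $b_n^pc_n^r=b_n^{p\beta}c_n^{\alpha}\cdot b_n^{p(1-\beta)}c_n^{r-\alpha}$ with $b_n=a_nA_n^{q/p}$, $\alpha=(p-1)/(p+p(r-1)/q-1)$, which splits $K(p,q,r)$ into powers of $K(P,Q,1)$ and of $K(1,q/p,B)$. Only \emph{then}, in the factor with first parameter $1$ (Lemma \ref{casep=1}), does it do partial summation and convexity --- and there the head sums \emph{are} pointwise bounded, $\sum_{k\le n}a_kA_k^{q/p}\le A_n^{1+q/p}$, precisely because the first exponent is $1$. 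This ordering is not cosmetic: it is what makes the convexity step closable, and it is what your route lacks.

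There is a second gap in the $r=1$ part. The bound $(p(q+1)-q)/p$ is indeed Theorem \ref{thm1}, but the substantive claim of \eqref{7} is the competing bound $C(p,q,\delta)$, and your sketch (``H\"older with a free parameter $\delta$, reduce to elementary comparisons $\sum_{n\le k}a_nA_n^s\lessgtr A_k^{s+1}/(s+1)$'') does not describe a workable mechanism. The paper's proof of Lemma \ref{basiclem} needs: an initial H\"older with $\theta=p/(p(q+1)-q)$ to isolate the auxiliary quantity \eqref{auxbound}; Copson's inequality (Lemma \ref{lem1}) for the first bound; and, for $C(p,q,\delta)$, a Jensen step exploiting concavity of $t\mapsto t^{\delta}$ inside the inner sum, partial summation controlled by the mean value theorem and by Lemma \ref{lem2}, and finally a self-referential H\"older \eqref{4.3} in which the quantity being estimated reappears on the right raised to the power $1/P$ and is absorbed (a bootstrap), yielding the exponent $Q$ in $C(p,q,\delta)^{Q}$. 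None of this is visible in, or recoverable from, the comparisons you propose, so the $\delta$-dependent half of \eqref{7}--\eqref{9} remains unproven in your outline.
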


    On considering the values of $C(p, q, \delta)$ for $\delta=1$
    and $\delta=q(p-1)/(p(q+1)-q)$, we readily deduce from Theorem
    \ref{thm2} the following
\begin{cor}
\label{cor0}
   Let $p \geq 1,  q>0$ be fixed. Let $K(p,q,r)$ be the
best possible constant
   such that inequality \eqref{2} holds for any non-negative sequence $(a_n)$. Then
\begin{equation}
\label{5}
   K(p,q,1) \leq \min \left ( \frac {p(q+1)-q}{p}, \ p^{\frac {(p-1)q}{(p-1)q+p}} \ , \ \left (1+\frac {(p-1)q}{p+q} \right )\left (1+\frac {p}{q(p-1)}\right )   \right ).
\end{equation}
\end{cor}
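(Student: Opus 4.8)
The plan is to deduce Corollary \ref{cor0} directly from the bound \eqref{7}--\eqref{8} of Theorem \ref{thm2} by evaluating the function $C(p,q,\delta)$ at the two endpoints of the admissible interval \eqref{9}. Indeed, \eqref{7} already contributes the first term $\frac{p(q+1)-q}{p}$ of the claimed minimum and guarantees $K(p,q,1) \le C(p,q,\delta)$ for every $\delta$ with $\frac{q(p-1)}{p(q+1)-q} \le \delta \le 1$, so it remains only to specialize $\delta$ and simplify.

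First I would set $B = 1 + \frac{p}{q(p-1)}$, so that $C(p,q,\delta) = \left( \delta B \left(1 + \frac{1}{1/(p-1)+\delta B-1}\right) \right)^{\delta}$. The decisive observation is the identity $B = \frac{p(q+1)-q}{q(p-1)}$, which shows that the lower endpoint $\delta_0 := \frac{q(p-1)}{p(q+1)-q}$ satisfies $\delta_0 B = 1$. This is precisely what makes the endpoint evaluation collapse.

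Putting $\delta = \delta_0$, the factor $\delta B$ equals $1$ and the inner denominator reduces to $\frac{1}{p-1}+1-1 = \frac{1}{p-1}$, so the bracket becomes $1+(p-1)=p$ and hence $C(p,q,\delta_0) = p^{\delta_0}$. Since $\delta_0 = \frac{(p-1)q}{(p-1)q+p}$, this is exactly the second term of the minimum. Putting $\delta = 1$ instead, the inner denominator equals $\frac{1}{p-1}+B-1 = \frac{p+q}{q(p-1)}$, its reciprocal is $\frac{(p-1)q}{p+q}$, and the bracket becomes $B\left(1+\frac{(p-1)q}{p+q}\right)$; thus $C(p,q,1) = \left(1+\frac{p}{q(p-1)}\right)\left(1+\frac{(p-1)q}{p+q}\right)$, the third term. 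Taking the minimum of the three bounds completes the argument.

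Because the whole derivation is a substitution, there is no genuine obstacle; the only steps demanding care are the algebraic identity $\delta_0 = 1/B$ that causes the $\delta=\delta_0$ case to telescope to $p^{\delta_0}$, and the verification that both $\delta=\delta_0$ and $\delta=1$ lie in the range \eqref{9} (the former being its left endpoint, the latter its right endpoint), so that \eqref{7} legitimately applies at these values.
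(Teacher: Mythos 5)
Your proposal is correct and follows exactly the paper's route: the paper deduces Corollary \ref{cor0} from \eqref{7}--\eqref{9} by evaluating $C(p,q,\delta)$ at the two endpoints $\delta=q(p-1)/(p(q+1)-q)$ and $\delta=1$, which is precisely your computation (including the key identity $\delta_0 = 1/B$ that makes the lower endpoint collapse to $p^{\delta_0}$).
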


    We note that Theorem \ref{thm2} together with Lemma
   \ref{reductionlem1} below shows that a bound for $K(p,q,r)$ with $p \geq 1, q>0, r >0$ satisfying $(p(q+r)-q)/p \geq
   1$ can be obtained by a bound of $K(p(1+(r-1)/q), q+r-1,1)$ and
   as \eqref{5} implies that $K(p (1+(r-1)/q), q+r-1,1) \leq
    (p(q+r)-q)/p$, it is easy to see that the assertion of
    Theorem \ref{thm1} follows from the assertions of
    Theorem \ref{thm2} and Lemma \ref{reductionlem1}.

  We point out here that among the three expressions on the
right-hand side of \eqref{5}, each one is likely to be the
minimum.
    For example, the middle one becomes the minimum when $p=2, q=1$
    while it's easy to see that the last one becomes the minimum for $p=q$ large enough and the first one becomes the minimum
     when $q$ is being fixed and $p \rightarrow \infty$. Moreover, it can happen that
    the minimum value in \eqref{7} occurs at a $\delta$ other than $q(p-1)/(p(q+1)-q),1$.
    For example, when $p=q=6$, the bound \eqref{5} gives $K(6,6,1) \leq 21/5$ while one checks easily
    that $C(6,6, 1.15/1.2)<21/5$. We shall not worry about determining the precise minimum
    of \eqref{7} in this paper.

   We note that the special case $p=1, q=r=2$ of Theorem \ref{thm2}
   leads to the following improvement on Bennet's result on the constant
   $K$ of inequality \eqref{1}:
\begin{cor}
\label{cor1}
   Inequality \eqref{1} holds with $K=\sqrt{6}$.
\end{cor}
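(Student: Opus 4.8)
The plan is to recognize that inequality \eqref{1} is precisely the special case $p=1$, $q=r=2$ of inequality \eqref{2}, as already noted after Theorem \ref{thm1}: the inner sum $\sum_{k\geq n}a_k^{3/2}$ matches $\sum_{k\geq n}a_k^{1+p/q}$ since $1+p/q=3/2$, and the right-hand side $\sum(a_n^pA_n^q)^{1+r/q}=\sum a_n^2A_n^4$. Hence the best constant $K$ in \eqref{1} equals $K(1,2,2)$, and it suffices to show $K(1,2,2)\leq\sqrt6$. First I would verify the hypotheses of Theorem \ref{thm2} for these values: $p=1\geq 1$, $q=2>0$, $r=2\geq 1$, and $p,r$ are not both equal to $1$ since $r=2$.

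Next I would compute the quantity governing which regime of Theorem \ref{thm2} applies, namely $q+r-q/p=2+2-2=2$. This places us exactly on the boundary $q+r-q/p\geq 2$, so the first bound of Theorem \ref{thm2} applies and gives
\begin{equation*}
   K(1,2,2)\leq K\!\left(p\Bigl(1+\tfrac{r-1}{q}\Bigr),\,q+r-1,\,1\right)\left(\frac{p(q+r)-q}{p}\right)^{r-1}
   = K\!\left(\tfrac32,3,1\right)\cdot 2,
\end{equation*}
since $p(1+(r-1)/q)=3/2$, $q+r-1=3$, $(p(q+r)-q)/p=2$, and $r-1=1$. As a sanity check one can confirm that the second regime formula, being continuous at the boundary value $q+r-q/p=2$, yields the identical reduction $K(1,2,2)\leq 2K(3/2,3,1)$.

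The remaining step is to bound $K(3/2,3,1)$, for which I would invoke Corollary \ref{cor0} with $p=3/2$, $q=3$. Among the three expressions on the right-hand side of \eqref{5}, the middle one $p^{(p-1)q/((p-1)q+p)}$ has exponent $(p-1)q/((p-1)q+p)=(3/2)/3=1/2$, giving $(3/2)^{1/2}=\sqrt6/2$; this is smaller than the first bound, which equals $2$, and the third, which equals $8/3$. Therefore $K(3/2,3,1)\leq\sqrt6/2$, and combining with the reduction above yields $K(1,2,2)\leq 2\cdot\sqrt6/2=\sqrt6$, as desired.

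I do not anticipate a genuine obstacle here, as the argument is a direct specialization of the machinery already developed; the only points requiring care are checking that the chosen parameters satisfy every hypothesis of Theorem \ref{thm2} and correctly identifying that the boundary case $q+r-q/p=2$ lands in the first regime, together with the routine arithmetic of evaluating the three competing bounds in \eqref{5}.
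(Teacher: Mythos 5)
Your proposal is correct and is essentially the paper's own (implicit) argument: the paper derives Corollary \ref{cor1} precisely by specializing Theorem \ref{thm2} to $p=1$, $q=r=2$ (the first regime, since $q+r-q/p=2$), reducing to $K(1,2,2)\leq 2K(3/2,3,1)$, and then bounding $K(3/2,3,1)\leq (3/2)^{1/2}=\sqrt{6}/2$ via the middle expression in \eqref{5}. Your parameter checks and arithmetic are all accurate, so nothing is missing.
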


\section{A few Lemmas}
\label{sec 2} \setcounter{equation}{0}
\begin{lemma}
\label{lem1} Let $d \geq c >1$ and $(\lambda_n)$ be a non-negative
sequence with $\lambda_1>0$. Let $
\Lambda_n=\sum^n_{k=1}\lambda_k$. Then for all non-negative
sequences $(x_n)$,
\begin{equation*}
  \sum^{\infty}_{n=1}\lambda_n\Lambda^{-c}_n\left ( \sum^n_{k=1}\lambda_kx_k\right )^d \leq
  \left ( \frac {d}{c-1}\right
  )^d\sum^{\infty}_{n=1}\lambda_n\Lambda^{d-c}_nx^d_n.
\end{equation*}
    The constant is best possible.
\end{lemma}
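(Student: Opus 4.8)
The plan is to read the asserted bound as a discrete weighted Hardy inequality and to prove it by an Elliott-type telescoping argument, which has the advantage of delivering the sharp constant directly. Put $y_n=\sum_{k=1}^n\lambda_kx_k$ and let $\mu_n=y_n/\Lambda_n$ be the $\lambda$-weighted mean of $x_1,\dots,x_n$ (well defined since $\lambda_1>0$). Because $\lambda_n\Lambda_n^{-c}y_n^d=\lambda_n\Lambda_n^{d-c}\mu_n^d$, the statement is equivalent to
\[ \sum_{n=1}^\infty\lambda_n\Lambda_n^{d-c}\mu_n^d\le\Big(\frac{d}{c-1}\Big)^d\sum_{n=1}^\infty\lambda_n\Lambda_n^{d-c}x_n^d. \]
I would first reduce this to the linearized inequality $(\star)$,
\[ \sum_{n=1}^\infty\lambda_n\Lambda_n^{d-c}\mu_n^d\le\frac{d}{c-1}\sum_{n=1}^\infty\lambda_n\Lambda_n^{d-c}\mu_n^{d-1}x_n. \]
Indeed, applying H\"older's inequality with exponents $d/(d-1)$ and $d$ to the right-hand side of $(\star)$ bounds it by $\frac{d}{c-1}\big(\sum\lambda_n\Lambda_n^{d-c}\mu_n^d\big)^{(d-1)/d}\big(\sum\lambda_n\Lambda_n^{d-c}x_n^d\big)^{1/d}$, and cancelling the common factor $\big(\sum\lambda_n\Lambda_n^{d-c}\mu_n^d\big)^{(d-1)/d}$ yields the constant $(d/(c-1))^d$. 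To make the cancellation legitimate I would run the whole argument on the partial sums $\sum_{n\le N}$ and let $N\to\infty$ at the end, the case of a divergent right-hand side being trivial.

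Everything then rests on $(\star)$, which I would obtain by telescoping. Using $\lambda_nx_n=y_n-y_{n-1}$, the $n$-th term of the left side of $(\star)$ minus $\frac{d}{c-1}$ times the $n$-th term of the right side equals $\lambda_n\Lambda_n^{-c}y_n^d-\frac{d}{c-1}\Lambda_n^{1-c}y_n^{d-1}(y_n-y_{n-1})$. The aim is to dominate this by $T_{n-1}-T_n$, where $T_n=\frac{1}{c-1}\Lambda_n^{1-c}y_n^d\ge0$ (with $T_0=0$, which is also the correct value of the $n=1$ contribution since $y_0=0$); summing the telescope leaves $-T_N\le0$, which is exactly $(\star)$. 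After multiplying by $c-1$ and dividing by $\Lambda_n^{1-c}$, and writing $\rho=\Lambda_{n-1}/\Lambda_n\in[0,1)$ and $w=y_{n-1}/y_n\in[0,1]$, the required per-term estimate collapses to the one-variable inequality
\[ g(w):=\rho^{-(c-1)}w^d-dw+\big(d-c+(c-1)\rho\big)\ge0,\qquad 0\le w\le1. \]
Since $g$ is convex in $w$ with minimiser $w_0=\rho^{(c-1)/(d-1)}\in[0,1]$ satisfying the pleasant identity $\rho^{-(c-1)}w_0^d=w_0$, one computes $g(w_0)=d-c+(c-1)\rho-(d-1)\rho^{(c-1)/(d-1)}$, so $g\ge0$ on $[0,1]$ is equivalent to the single inequality $(\heartsuit)$,
\[ (d-1)\rho^{(c-1)/(d-1)}\le d-c+(c-1)\rho,\qquad 0\le\rho\le1. \]

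Verifying $(\heartsuit)$ is the crux of the proof, and the point where both hypotheses $c>1$ and $c\le d$ are used decisively. Let $h(\rho)$ denote the difference of the two sides of $(\heartsuit)$. Then $h(1)=0$, $h(0)=d-c\ge0$, and $h'(\rho)=(c-1)\big(1-\rho^{(c-1)/(d-1)-1}\big)$. Because $0<(c-1)/(d-1)\le1$ the exponent $(c-1)/(d-1)-1$ is $\le0$, hence $\rho^{(c-1)/(d-1)-1}\ge1$ on $(0,1)$ and $h'\le0$ there; thus $h$ is non-increasing and $h\ge h(1)=0$ throughout $[0,1]$. This is where I expect the real work to lie: the sharp constant $d/(c-1)$ is exactly what forces the coefficient $(d-1)$ in $(\heartsuit)$, so the inequality has no slack and must be established by this convexity bookkeeping rather than by a lossy discretisation of the corresponding integral (Hardy) inequality. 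Note that if instead $c>d$ the exponent of $\rho$ would exceed $1$, making $h$ increasing and $h<0$ on $(0,1)$, so both the argument and the sharp constant genuinely require $d\ge c$.

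Finally, for the optimality of the constant I would exhibit near-extremal sequences: take $\lambda_n\equiv1$, so $\Lambda_n=n$, and $x_n=n^{-\beta}$ with $\beta=1-(c-1)/d+\varepsilon$ for small $\varepsilon>0$. Then $\sum_{k\le n}x_k\sim n^{1-\beta}/(1-\beta)$, and both sides of the lemma reduce to constant multiples of $\sum_n n^{-1-d\varepsilon}$, with the quotient of the two constants equal to $(1-\beta)^{-d}$. Since both series are dominated by their tails as $\varepsilon\to0^+$, the ratio of the two sides tends to $(1-\beta)^{-d}\to(d/(c-1))^d$, showing that no smaller constant is admissible.
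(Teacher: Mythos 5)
Your proof is correct, but note that the paper does not actually prove this lemma: it is quoted as Copson's classical inequality, with pointers to Copson's 1928 paper and to Corollary 3 of Theorem 2 in Bennett's paper, so the comparison here is with the classical literature rather than with an argument in the text. What you have written is essentially a self-contained reconstruction of the Elliott--Copson telescoping proof: the reduction of the power-$d$ inequality to its linearized form via H\"older's inequality and cancellation (made legitimate by working with partial sums up to $N$), the per-term domination by a telescoping difference $T_{n-1}-T_n$, and the reduction of that domination to the two-variable inequality $g(w)\ge 0$, whose minimum in $w$ is controlled by the one-variable inequality $(d-1)\rho^{(c-1)/(d-1)}\le d-c+(c-1)\rho$ on $[0,1]$; all of this is sound, and your sharpness construction $\lambda_n\equiv 1$, $x_n=n^{-\beta}$, $\beta=1-(c-1)/d+\varepsilon$ is the standard one and works as sketched. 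Two small points of bookkeeping: for $n\ge 2$ one has $\rho=\Lambda_{n-1}/\Lambda_n\in(0,1]$ rather than $[0,1)$ (since $\lambda_1>0$; and $\rho=1$ occurs whenever $\lambda_n=0$), which is harmless because your inequality $(\heartsuit)$ holds on all of $[0,1]$; and the $n=1$ term, where you set $T_0=0$, requires the separate verification $1-\frac{d}{c-1}\le-\frac{1}{c-1}$, i.e.\ $c\le d$, which does hold and is in fact a second place where the hypothesis $d\ge c$ enters. The trade-off between the two routes is clear: the paper's citation buys brevity and defers sharpness to Copson, while your argument makes the lemma, including the optimality of the constant $\left(\frac{d}{c-1}\right)^d$, self-contained at the cost of a page of elementary but delicate convexity estimates.
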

   The above lemma is the well-known Copson's inequality \cite[Theorem 1.1]{Copson}, see also
   Corollary 3 to Theorem 2 of \cite{B1}.

\begin{lemma}
\label{lem2} Let $p < 0 $. For any non-negative sequence $(a_n)$
with $a_1>0$ and $A_n=\sum^n_{k=1}a_k$, we have for any $n \geq
1$,
\begin{equation}
\label{2.2}
  \sum^{\infty}_{k=n}a_kA^{p-1}_k \leq \left (1-\frac {1}{p} \right
  )A^p_n.
\end{equation}
\end{lemma}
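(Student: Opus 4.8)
The plan is to isolate the leading term of the sum and to control the remaining tail by a telescoping estimate that exploits the sign of $p$. Writing $A_0=0$, I would first record the trivial bound $a_nA_n^{p-1}\le A_n\cdot A_n^{p-1}=A_n^p$, which holds because $0\le a_n\le A_n$ and $A_n>0$ (the positivity coming from $a_1>0$, which also guarantees that all the powers $A_k^{p-1}$ and $A_k^p$ are well defined for $k\ge 1$). Everything then reduces to bounding the tail $\sum_{k=n+1}^{\infty}a_kA_k^{p-1}$.

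The heart of the argument is a term-by-term inequality: for each $k\ge n+1$,
\[
  a_kA_k^{p-1}\le \frac{1}{-p}\left(A_{k-1}^p-A_k^p\right).
\]
I would prove this by setting $x=A_{k-1}/A_k\in(0,1]$ (note $A_k\ge A_1>0$), dividing through by $A_k^p>0$, and reducing the claim to the elementary inequality $x^p\ge 1-p+px$ for $x\in(0,1]$. Since $p<0$ the function $t\mapsto t^p$ is convex on $(0,\infty)$, so it lies above its tangent line at $t=1$, which is exactly $1+p(t-1)=1-p+pt$; this yields the claim. Alternatively, the same per-term bound follows by comparing $a_kA_k^{p-1}=(A_k-A_{k-1})A_k^{p-1}$ with $\int_{A_{k-1}}^{A_k}t^{p-1}\,dt$, using that $t^{p-1}$ is decreasing for $p<0$.

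With the per-term inequality in hand, I would sum over $k\ge n+1$. The right-hand side telescopes to $\frac{1}{-p}\bigl(A_n^p-\lim_{k\to\infty}A_k^p\bigr)$, and since $A_k$ is non-decreasing and $p<0$, the sequence $A_k^p$ is non-increasing and bounded below by $0$, so its limit exists and is non-negative. Hence $\sum_{k=n+1}^{\infty}a_kA_k^{p-1}\le -A_n^p/p$. Combining with the leading term gives
\[
  \sum_{k=n}^{\infty}a_kA_k^{p-1}\le A_n^p-\frac{A_n^p}{p}=\left(1-\frac{1}{p}\right)A_n^p,
\]
which is the desired estimate \eqref{2.2}.

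I do not expect a serious obstacle: the only points requiring care are the positivity of the partial sums (secured by $a_1>0$) and the convergence of the tail, which the telescoping sum settles automatically, so no separate convergence discussion is needed. The one genuinely substantive ingredient is the convexity, or tangent-line, inequality $x^p\ge 1-p+px$, and this is precisely the step where the hypothesis $p<0$ is used.
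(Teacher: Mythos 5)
Your proposal is correct and follows essentially the same route as the paper: both rest on the tangent-line inequality $x^p \geq 1+p(x-1)$ applied with $x=A_{k-1}/A_k$, followed by telescoping the resulting bound $-pa_kA_k^{p-1}\leq A_{k-1}^p-A_k^p$ over $k\geq n+1$ and adding the trivial estimate $a_nA_n^{p-1}\leq A_n^p$. The only difference is presentational: the paper states the elementary inequality $x^p-px+p-1\geq 0$ outright, whereas you justify it by convexity.
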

\begin{proof}
    We start with the inequality $x^p - px + p - 1 \geq 0$. By setting
$x = A_{k-1}/A_k$ for $k \geq 2$, we obtain
\begin{equation*}
  A^p_{k-1}-pA_{k-1}A^{p-1}_k+(p-1)A^p_k \geq 0.
\end{equation*}
   Replacing $A_{k-1}$ in the middle term of the left-hand side
   expression above by $A_k-a_k$ and simplifying, we obtain
\begin{equation*}
  A^p_{k-1}-A^p_k \geq -pa_kA^{p-1}_k.
\end{equation*}
   Upon summing, we obtain
\begin{equation*}
  \sum^{\infty}_{k=n+1}a_kA^{p-1}_k \leq -\frac {1}{p}A^p_n.
\end{equation*}
   Inequality \eqref{2.2} follows from above upon noting that
   $a_nA^{p-1}_n \leq A^p_n$.
\end{proof}

\begin{lemma}
\label{reductionlem}
     Let $p \geq 1, q>0, r \geq 1$ be fixed with $p,r$ not both
being $1$. Under the same notions of Theorem \ref{thm1}, we have
\begin{align*}
    & K(p,q,r)  \\
\leq& \left (K \left ( p\left (1+\frac {r-1}{q} \right ), q+r-1, 1
\right ) \right )^{\frac {p-1}{p+p(r-1)/q-1}} \left ( K \left (1,
\frac {q}{p}, \frac {p(q+r)-q}{p} \right ) \right )^{\frac
{p(r-1)}{pq+p(r-1)-q}}.  \nonumber
\end{align*}
\end{lemma}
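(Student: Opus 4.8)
The plan is to obtain the bound from a single application of H\"older's inequality, exploiting the fact that the two ``model'' inequalities and the target inequality all involve one and the same inner sum. Throughout, write $S_n=\sum_{k=n}^{\infty}a_k^{1+p/q}$ and put $D=p(q+r-1)-q=pq+pr-p-q$; under the hypotheses $p\ge1$, $q>0$, $r\ge1$ with $p,r$ not both $1$ one checks that $D>0$. Set
\[
   \theta_1=\frac{q(p-1)}{D},\qquad \theta_2=\frac{p(r-1)}{D},
\]
so that $\theta_1,\theta_2\ge0$ and $\theta_1+\theta_2=1$; these are exactly the exponents carried by $K_1:=K\!\left(p(1+(r-1)/q),q+r-1,1\right)$ and $K_2:=K\!\left(1,q/p,(p(q+r)-q)/p\right)$ in the statement. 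The first observation to record is that the inner sum in \eqref{2} for the parameters of $K_1$ is $\sum_{k=n}^{\infty}a_k^{1+p'/q'}$ with $p'=p(q+r-1)/q$, $q'=q+r-1$, and since $p'/q'=p/q$ this is again $S_n$; likewise the inner sum for $K_2$ is $\sum_{k=n}^{\infty}a_k^{1+p/q}=S_n$ because its second parameter is $q/p$.

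The key step is the pointwise factorisation
\[
   a_n^{p}A_n^{q}S_n^{r}=\bigl(a_n^{p'}A_n^{q'}S_n\bigr)^{\theta_1}\bigl(a_n A_n^{q/p}S_n^{\,q+r-q/p}\bigr)^{\theta_2},
\]
which I would verify by matching the exponents of $a_n$, of $A_n$ and of $S_n$ one at a time; each is a routine identity in $p,q,r$ built from $D=(p-1)(q+r-1)+(r-1)$. Summing this identity over $n$ and invoking H\"older's inequality with the conjugate exponents $1/\theta_1$ and $1/\theta_2$ yields
\[
   L:=\sum_{n=1}^{\infty}a_n^{p}A_n^{q}S_n^{r}\le\Bigl(\sum_{n=1}^{\infty}a_n^{p'}A_n^{q'}S_n\Bigr)^{\theta_1}\Bigl(\sum_{n=1}^{\infty}a_n A_n^{q/p}S_n^{\,q+r-q/p}\Bigr)^{\theta_2}.
\]

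I would then bound the two factors by the definitions of $K_1$ and $K_2$. The first sum is the left-hand side of \eqref{2} for the parameters $(p',q',1)$, so it is at most $K_1\sum_{n}(a_n^{p'}A_n^{q'})^{1+1/q'}$, and the second sum is the left-hand side of \eqref{2} for $(1,q/p,q+r-q/p)$, so it is at most $K_2\sum_{n}(a_nA_n^{q/p})^{1+(q+r-q/p)/(q/p)}$. A short computation shows that both of these right-hand sums equal the single quantity $R:=\sum_{n}a_n^{p(q+r)/q}A_n^{q+r}=\sum_{n}(a_n^{p}A_n^{q})^{1+r/q}$, which is precisely the right-hand side of \eqref{2}. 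Hence $L\le(K_1R)^{\theta_1}(K_2R)^{\theta_2}=K_1^{\theta_1}K_2^{\theta_2}R$, and since $K(p,q,r)$ is the best constant in \eqref{2} this is the asserted bound.

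The only genuine work is the exponent bookkeeping: confirming $\theta_1+\theta_2=1$, checking the threefold matching in the factorisation, and verifying that the two superficially different right-hand sums both collapse to $R$. I would also note that the parameters feeding $K_1$ and $K_2$ meet the admissibility condition of Theorem \ref{thm1} (these reduce to $p'=p(q+r-1)/q\ge1$ and $q+r-q/p\ge1$, both clear from $p\ge1$, $r\ge1$, $q>0$), so that $K_1,K_2$ are finite; and I would dispose of the degenerate cases $p=1$ (then $\theta_1=0$ and one applies the $K_2$ inequality directly) and $r=1$ (then $\theta_2=0$ and one applies the $K_1$ inequality directly), so that H\"older is invoked only when $\theta_1,\theta_2>0$.
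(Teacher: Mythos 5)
Your proposal is correct and is essentially the paper's own proof: the paper sets $\alpha=\frac{p-1}{p+p(r-1)/q-1}$ (your $\theta_1$, with $1-\alpha=\theta_2$), factors the summand $a_n^pA_n^qS_n^r$ into exactly the two pieces you describe, applies H\"older's inequality with exponents $1/\alpha$ and $1/(1-\alpha)$, and then invokes inequality \eqref{2} with the parameters $\bigl(p(1+(r-1)/q),\,q+r-1,\,1\bigr)$ and $\bigl(1,\,q/p,\,(p(q+r)-q)/p\bigr)$ on the two resulting factors, both of whose right-hand sides collapse to $\sum_n(a_n^pA_n^q)^{1+r/q}$ just as you note. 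Your additional verifications (admissibility of the parameters for Theorem \ref{thm1} and the explicit treatment of the degenerate cases $p=1$ or $r=1$) match the paper's implicit or brief remarks, so there is nothing substantively different to report.
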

\begin{proof}
   As it is easy to check the assertion of the lemma holds when $p=1$ or $r=1$, we may assume $p>1, r>1$ here. We set
\begin{align*}
   \alpha=\frac {p-1}{p+p(r-1)/q-1}, \ \beta=\alpha \left(1+\frac
   {r-1}{q} \right ), \ b_n=a_nA^{q/p}_n, \ c_n=\sum^{\infty}_{k=n}a^{1+p/q}_k.
\end{align*}
   Note that we have $0<\alpha<1$ as $p>1, r>1$ here. By
   H\"older's inequality, we have
\begin{align}
\label{3.5}
   & \sum^{\infty}_{n=1}a^p_nA^q_n\left ( \sum^{\infty}_{k=n}a_k^{1+p/q} \right
    )^{r}=\sum^{\infty}_{n=1}b^p_nc^r_n = \sum^{\infty}_{n=1}b^{p\beta}_nc^{\alpha}_n \cdot
b^{p(1-\beta)}_nc^{r-\alpha}_n  \\
\leq & \left ( \sum^{\infty}_{n=1}b^{p\beta/\alpha}_nc_n
\right)^{\alpha} \cdot \left (
\sum^{\infty}_{n=1}b^{p(1-\beta)/(1-\alpha)}_nc^{(r-\alpha)/(1-\alpha)}_n
\right)^{1-\alpha}  \nonumber \\
= &  \left ( \sum^{\infty}_{n=1}b^{p(1+(r-1)/q)}_nc_n
 \right)^{\frac
{p-1}{p+p(r-1)/q-1}} \cdot \left (
\sum^{\infty}_{n=1}b_nc^{(p(q+r)-q)/p}_n \right)^{\frac
{p(r-1)}{pq+p(r-1)-q}}. \nonumber
\end{align}
  The assertion of the lemma now follows on applying inequality
  \eqref{2} to both factors of the last expression above.
\end{proof}

\begin{lemma}
\label{reductionlem1}
     Let $p \geq 1, q>0, 0< r \leq 1$ be fixed satisfying $(p(q+r)-q)/p \geq 1$. Under the same notions
of Theorem \ref{thm1}, we have
\begin{align}
\label{prebound1}
    K(p,q,r) \leq  \left (K \left ( p\left (1+\frac {r-1}{q} \right ), q+r-1, 1
\right ) \right )^{r}.
\end{align}
\end{lemma}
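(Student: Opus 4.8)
The plan is to follow the same reduction strategy as in Lemma \ref{reductionlem}, but to exploit the constraint $r \le 1$ so that a single application of H\"older's inequality suffices and no second factor survives. As in that lemma I would set $b_n = a_nA_n^{q/p}$ and $c_n = \sum_{k=n}^\infty a_k^{1+p/q}$, so that the left-hand side of \eqref{2} becomes $\sum_n b_n^p c_n^r$. Writing $p_1 = p(1+(r-1)/q)$ and $q_1 = q+r-1$, the first task is to record two bookkeeping identities: first, $b_n^{p_1} = a_n^{p_1}A_n^{q_1}$ (since $qp_1/p = q_1$) together with $1+p_1/q_1 = 1+p/q$, so that $\sum_n b_n^{p_1} c_n$ is exactly the left-hand side of \eqref{2} for the parameters $(p_1,q_1,1)$; and second, that the right-hand side of that instance of \eqref{2}, namely $\sum_n (a_n^{p_1}A_n^{q_1})^{1+1/q_1}$, equals $\sum_n b_n^{p(q+r)/q}$, which is also precisely the target right-hand side $\sum_n (a_n^pA_n^q)^{1+r/q}$.

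The key computation is the exponent identity $p - rp_1 = (1-r)\,p(q+r)/q$, which follows from $q - rq - r^2 + r = (1-r)(q+r)$. Using it I would split
\[
\sum_{n=1}^\infty b_n^p c_n^r = \sum_{n=1}^\infty \bigl(b_n^{p_1} c_n\bigr)^r \bigl(b_n^{p(q+r)/q}\bigr)^{1-r}
\]
and apply H\"older's inequality with exponents $1/r$ and $1/(1-r)$ (the case $r=1$ being trivial) to obtain
\[
\sum_{n=1}^\infty b_n^p c_n^r \le \left( \sum_{n=1}^\infty b_n^{p_1} c_n \right)^r \left( \sum_{n=1}^\infty b_n^{p(q+r)/q} \right)^{1-r}.
\]
Finally I would bound the first factor by \eqref{2} applied with $(p_1,q_1,1)$, giving $\sum_n b_n^{p_1} c_n \le K(p_1,q_1,1)\sum_n b_n^{p(q+r)/q}$, and combine this with the second factor. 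Since both factors are powers of the same sum $S = \sum_n b_n^{p(q+r)/q}$, the product collapses to $K(p_1,q_1,1)^r\,S$, which is exactly the claimed bound.

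The main obstacle is not the analysis but the verification that makes this single-factor estimate legitimate: one must check that $(p_1,q_1,1)$ genuinely satisfies the hypotheses under which \eqref{2} is available. Here the running assumption $(p(q+r)-q)/p \ge 1$ is exactly what is needed, since it gives $q+r \ge 1 + q/p > 1$, hence $q_1 = q+r-1 > 0$ and $p_1 = pq_1/q \ge 1$, and in turn $(p_1(q_1+1)-q_1)/p_1 = q_1+1-q_1/p_1 \ge 1$ because $p_1 \ge 1$. Once these constraints are in place, the identification of the two right-hand sides as the common sum $S$ is the crux of the argument, and everything else is immediate.
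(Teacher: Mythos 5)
Your proof is correct and takes essentially the same route as the paper's: the same substitution $b_n=a_nA_n^{q/p}$, $c_n=\sum_{k=n}^{\infty}a_k^{1+p/q}$, the same H\"older split with exponents $1/r$ and $1/(1-r)$ (the paper parametrizes it via $\alpha=1-r$, $\beta=\alpha(1+r/q)$, which produces exactly your two factors), and the same application of \eqref{2} to the factor containing $c_n$. Your explicit check that $\bigl(p(1+(r-1)/q),\,q+r-1,\,1\bigr)$ satisfies the hypotheses of Theorem \ref{thm1} is a detail the paper leaves implicit, but otherwise the arguments coincide.
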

\begin{proof}
   We may assume $0<r<1$ here. We set
\begin{align*}
   \alpha=1-r, \ \beta=\alpha \left(1+\frac
   {r}{q} \right ), \ b_n=a_nA^{q/p}_n, \ c_n=\sum^{\infty}_{k=n}a^{1+p/q}_k.
\end{align*}
   Note that we have $0<\alpha<1$. By
   H\"older's inequality, we have
\begin{align*}
   & \sum^{\infty}_{n=1}a^p_nA^q_n\left ( \sum^{\infty}_{k=n}a_k^{1+p/q} \right
    )^{r}=\sum^{\infty}_{n=1}b^p_nc^r_n = \sum^{\infty}_{n=1}b^{p\beta}_n \cdot b^{p(1-\beta)}_n c^{r}_n \ \\
\leq & \left ( \sum^{\infty}_{n=1}b^{p\beta/\alpha}_n
\right)^{\alpha}  \left (
\sum^{\infty}_{n=1}b^{p(1-\beta)/(1-\alpha)}_nc^{r/(1-\alpha)}_n
\right)^{1-\alpha}  \nonumber \\
= &  \left ( \sum^{\infty}_{n=1}b^{p(1+r/q)}_n
  \right)^{1-r} \left (
\sum^{\infty}_{n=1}b^{p(1+(r-1)/q)}_nc_n \right)^{r}. \nonumber
\end{align*}
  The assertion of the lemma now follows on applying inequality
  \eqref{2} to the second factor of the last expression above.
\end{proof}
\section{Proof of Theorem \ref{thm2}}
\label{sec 3} \setcounter{equation}{0}

    We obtain the proof of Theorem \ref{thm2} via the following
    two lemmas:
\begin{lemma}
\label{basiclem}
   Let $p \geq 1, q>0$ be fixed.  Under the same notions of Theorem \ref{thm1}, inequality \eqref{2} holds when $r=1$
   with $K(p,q,1)$ bounded by the right-hand side expression of \eqref{7}.
\end{lemma}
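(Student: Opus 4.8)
The plan is to prove the $r=1$ case of inequality \eqref{2}, which reads
\begin{equation*}
   \sum^{\infty}_{n=1}a^p_nA^q_n \sum^{\infty}_{k=n}a_k^{1+p/q} \leq K(p,q,1) \sum^{\infty}_{n=1}\left (a^p_nA^q_n \right )^{1+1/q},
\end{equation*}
with the constant bounded by $\min_{\delta}\bigl( (p(q+1)-q)/p,\, C(p,q,\delta)\bigr)$. The first bound $(p(q+1)-q)/p$ is simply the specialization $r=1$ of Theorem \ref{thm1}, so nothing new is needed there; the real content is establishing the family of bounds $C(p,q,\delta)$ for each admissible $\delta$ in the range \eqref{9}. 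Writing $b_n=a_nA^{q/p}_n$ as in the reduction lemmas, the left-hand side is $\sum b^p_n \sum_{k\ge n} a_k^{1+p/q}$ and the right-hand side is $\sum b^{p(1+1/q)}_n$, so the task is to control the inner tail sum $\sum_{k\ge n} a_k^{1+p/q}$ in terms of the $A_n$'s.

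First I would interchange the order of summation to rewrite the left-hand side as $\sum^{\infty}_{k=1} a_k^{1+p/q} \sum^{k}_{n=1} a^p_n A^q_n$, reducing the problem to estimating the partial sum $\sum^{k}_{n=1} a^p_n A^q_n$ from above by a suitable multiple of a power of $A_k$. I expect this estimate to take the form $\sum^{k}_{n=1} a^p_n A^q_n \le c\, a_k^{?} A_k^{?}$ for appropriate exponents, obtained by a convexity/telescoping argument: using an inequality of the type $A_n^{q+1}-A_{n-1}^{q+1} \ge (q+1) a_n A_n^{q}$ (the discrete analogue of differentiating $A^{q+1}$, valid by the mean value theorem applied to $x\mapsto x^{q+1}$), one telescopes to bound $\sum a^p_n A^q_n$. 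The parameter $\delta$ enters precisely through a Hölder-type split used to balance the power of $a_k$ produced by such a telescoping estimate against the power of $A_k$, and the constant $C(p,q,\delta)$ in \eqref{8} is the outcome of optimizing the resulting product. I would cross-check the structure of $C(p,q,\delta)$ by noting the two distinguished values $\delta=1$ and $\delta=q(p-1)/(p(q+1)-q)$ flagged in the discussion, which are exactly the endpoints of the range \eqref{9}; this strongly suggests that $\delta$ is a free Hölder exponent and that the constraint \eqref{9} is the region where the optimization is legitimate (e.g. where the competing exponents remain positive and Hölder's inequality applies in the correct direction).

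The main obstacle I anticipate is pinning down the correct telescoping inequality and carrying the Hölder split so that the exponents on $a_k$ and $A_k$ combine to reproduce the right-hand side $\sum b_k^{p(1+1/q)} = \sum a_k^{p(1+1/q)} A_k^{q+1}$ exactly, with the leftover factor collapsing into the closed form $C(p,q,\delta)$. In particular, the denominator $1/(p-1)+\delta(1+p/(q(p-1)))-1$ appearing in \eqref{8} must arise naturally from summing a geometric-type series in the telescoped expression, and verifying that this denominator is positive is presumably exactly what forces the lower bound $\delta \ge q(p-1)/(p(q+1)-q)$ in \eqref{9}. I would therefore organize the proof so that the admissibility constraint \eqref{9} emerges as the precise condition under which every step (the convexity inequality, the application of Hölder, and the convergence of the summed geometric factor) is valid, rather than imposing it in advance.
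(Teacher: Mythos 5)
Your opening move (interchanging the order of summation to rewrite the left-hand side as $\sum_{k} a_k^{1+p/q}\sum_{n=1}^{k} a_n^p A_n^q$) agrees with the paper's first step in \eqref{3.1}, but the plan you build on it has a genuine gap: for $p>1$ no pointwise estimate of the form $\sum_{n=1}^{k} a_n^p A_n^q \le c\, a_k^{\alpha} A_k^{\beta}$ can do the job. To match the right-hand side $K\sum_k a_k^{p(1+1/q)}A_k^{q+1}$ term by term you would need $\sum_{n=1}^{k} a_n^p A_n^q \le K a_k^{p-1}A_k^{q+1}$, and this fails badly: take $a_1=1$, $a_2=\epsilon$; the left side is at least $1$ while the right side is of order $\epsilon^{p-1}$, so any bound carrying a positive power of $a_k$ collapses as $a_k\to 0$. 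The only pointwise bound that survives is the trivial $\sum_{n\le k} a_n^p A_n^q \le A_k^{p+q}$, but inserting it produces $\sum_k a_k^{1+p/q}A_k^{p+q}$, which is \emph{not} dominated by $K\sum_k a_k^{p(1+1/q)}A_k^{q+1}$ in general (take $a_k=1$ for $k\le N$: the two sides grow like $N^{p+q+1}$ and $N^{q+2}$ respectively). Incidentally, your telescoping inequality is also stated backwards: by the mean value theorem $A_n^{q+1}-A_{n-1}^{q+1}\le (q+1)a_nA_n^q$, not $\ge$, so telescoping gives a \emph{lower} bound for $\sum_n a_nA_n^q$, not the upper bound your plan needs.

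The paper avoids any pointwise reduction. After the same interchange, it applies H\"older's inequality with exponent $\theta=p/(p(q+1)-q)$ in such a way that one factor is exactly the target sum $\sum_n (a_n^pA_n^q)^{1+1/q}$, and the whole problem reduces to the summation-level (Copson-type) inequality \eqref{auxbound}, which compares two sums rather than individual terms. The bound $(p(q+1)-q)/p$ then follows from Copson's inequality (Lemma \ref{lem1}), while the bounds $C(p,q,\delta)$ come from a separate argument in which $\delta$ enters not through a H\"older split, as you guessed, but through Jensen's inequality for the concave map $t\mapsto t^{\delta}$ against the weights $a_k/A_n$; the denominator $1/(p-1)+\delta(1+p/(q(p-1)))-1$ in \eqref{8} arises from the tail-sum estimate of Lemma \ref{lem2} (not from summing a geometric series), and the constraint \eqref{9} is what keeps the exponent in Lemma \ref{lem2} negative and the auxiliary H\"older exponents $P,Q$ greater than $1$, after which a self-referential H\"older step closes the estimate. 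These sum-versus-sum inequalities (Copson, Lemma \ref{lem2}, Abel summation plus the bootstrap) are precisely the machinery your outline is missing, and without them the reduction you propose cannot be completed.
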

\begin{proof}
    We may assume that only finitely many $a_n$'s are positive, say $a_n = 0$ whenever $n >
    N$. We may also assume $a_1>0$. As the case $p=1$ of the lemma is already contained in Theorem \ref{thm1}, we may further assume
    $p>1$ throughout the proof. Moreover, even though the assertion
    that $K(p,q,1) \leq (p(q+1)-q)/p$ is already given in Theorem \ref{thm1}, we
include a new proof here.

    We recast the left-hand side expression of \eqref{2}
    as
\begin{align}
\label{3.1}
   & \sum^{N}_{n=1}a^p_nA^q_n \sum^{N}_{k=n}a_k^{1+p/q}
   =\sum^{N}_{n=1}a_n^{1+p/q}\sum^{n}_{k=1}a^p_kA^q_k  \\
   =& \sum^{N}_{n=1}\left (a^p_nA^q_n \right )^{\theta(1+1/q)} \cdot a_n^{1+p/q}
   \left (a^p_nA^q_n \right )^{-\theta(1+1/q)}
   \sum^{n}_{k=1}a^p_kA^q_k  \nonumber \\
   \leq & \left (\sum^{N}_{n=1}\left (a^p_nA^q_n \right )^{1+1/q} \right
   )^{\theta} \left ( \sum^{N}_{n=1} a_n^{(1+p/q)/(1-\theta)}
   \left (a^p_nA^q_n \right )^{-\theta(1+1/q)/(1-\theta)}
   \left (\sum^{n}_{k=1}a^p_kA^q_k \right )^{1/(1-\theta)} \right
   )^{1-\theta} \nonumber \\
   = & \left (\sum^{N}_{n=1}\left (a^p_nA^q_n \right )^{1+1/q} \right
   )^{\theta} \left ( \sum^{N}_{n=1} a_nA^{-p(q+1)/(q(p-1))}_n
   \left (\sum^{n}_{k=1}a^p_kA^q_k \right )^{(p(q+1)-q)/(q(p-1))} \right
   )^{1-\theta} ,  \nonumber
\end{align}
   where we set
\begin{equation*}
  \theta=\frac {p}{p(q+1)-q},
\end{equation*}
   so that $0<\theta<1$ and the inequality in \eqref{3.1} follows from an
   application of H\"older's inequality.

   Thus, in order to prove Theorem \ref{thm2}, it suffices to show that
\begin{align}
\label{auxbound}
   \sum^{N}_{n=1} a_nA^{-p(q+1)/(q(p-1))}_n
   \left (\sum^{n}_{k=1}a^p_kA^q_k \right )^{(p(q+1)-q)/(q(p-1))}
   \leq K_1(p,q)
   \sum^{N}_{n=1}\left (a^p_nA^q_n \right )^{1+1/q},
\end{align}
   where
\begin{align*}
    K_1(p,q) = \min_{\delta} \left ( \left (\frac {p(q+1)-q}{p} \right )^{1/(1-\theta)}, \ C \left(p,q, \delta \right )^{1/(1-\theta)}  \right
),
\end{align*}
    where $C(p,q,\delta)$ is defined as in \eqref{8} and the
minimum is taken over the $\delta$'s satisfying \eqref{9}.

   Note first we have
\begin{align*}
   & \sum^{N}_{n=1} a_nA^{-p(q+1)/(q(p-1))}_n
   \left (\sum^{n}_{k=1}a^p_kA^q_k \right )^{(p(q+1)-q)/(q(p-1))}
   \nonumber \\
   \leq & \sum^{N}_{n=1} a_nA^{-p(q+1)/(q(p-1))}_n
   \left (\sum^{n}_{k=1}a^p_kA^{q-q/(p(q+1)-q)}_kA^{q/(p(q+1)-q)}_n \right
   )^{(p(q+1)-q)/(q(p-1))} \nonumber \\
   =& \sum^{N}_{n=1} a_n
   \left (\sum^{n}_{k=1}\frac {a_k}{A_n}\left (a^{p-1}_kA^{q-q/(p(q+1)-q)}_k \right ) \right
   )^{(p(q+1)-q)/(q(p-1))} \nonumber \\
   \leq &  \left (\frac {p(q+1)-q}{p} \right )^{(p(q+1)-q)/(q(p-1))}\sum^{N}_{n=1}\left (a^p_nA^q_n \right
   )^{1+1/q}, \nonumber
\end{align*}
   where the last inequality above follows from Lemma \ref{lem1}
   by setting $d=c=(p(q+1)-q)/(q(p-1)), \lambda_n=a_n,
   x_n=a^{p-1}_nA^{q-q/(p(q+1)-q)}_n$ there. This establishes \eqref{auxbound} with
\begin{align*}
   K_1(p,q)=\left (\frac {p(q+1)-q}{p} \right )^{1/(1-\theta)}.
\end{align*}

  Next, we use the idea in \cite{G} (see also \cite{CTZ}) to see
  that for any $0 < \delta \leq 1$,
   \begin{align}
\label{4.1}
   & \sum^{N}_{n=1} a_nA^{-p(q+1)/(q(p-1))}_n
   \left (\sum^{n}_{k=1}a^p_kA^q_k \right )^{(p(q+1)-q)/(q(p-1))}
   \\
   = & \sum^{N}_{n=1} a_nA^{-1/(p-1)}_n
   \left (\sum^{n}_{k=1}\frac {a_k}{A_n}a^{p-1}_kA^q_k \right
   )^{(p(q+1)-q)/(q(p-1))} \nonumber \\
   = & \sum^{N}_{n=1} a_nA^{-1/(p-1)}_n
   \left (\sum^{n}_{k=1}\frac {a_k}{A_n}\left (a^{(p-1)/\delta}_kA^{q/\delta}_k \right )^{\delta} \right
   )^{(p(q+1)-q)/(q(p-1))} \nonumber \\
    \leq & \sum^{N}_{n=1} a_nA^{-1/(p-1)}_n
   \left (\sum^{n}_{k=1}\frac {a_k}{A_n}a^{(p-1)/\delta}_kA^{q/\delta}_k \right
   )^{\delta(p(q+1)-q)/(q(p-1))}.  \nonumber
\end{align}
   We now further require that
\begin{align*}
   \frac {q(p-1)}{p(q+1)-q} < \delta \leq 1,
\end{align*}
   then on setting for $1 \leq n \leq N$,
\begin{align*}
   S_n=\sum^N_{k=n} a_kA^{-1/(p-1)-\delta(p(q+1)-q)/(q(p-1))}_k, \ T_n=\left (\sum^{n}_{k=1}a^{1+(p-1)/\delta}_kA^{q/\delta}_k \right
   )^{\delta(p(q+1)-q)/(q(p-1))},
\end{align*}
   we have by partial summation, with $T_0=0$,
\begin{align}
\label{4.2}
   &  \sum^{N}_{n=1} a_nA^{-1/(p-1)}_n
   \left (\sum^{n}_{k=1}\frac {a_k}{A_n}a^{(p-1)/\delta}_kA^{q/\delta}_k \right
   )^{\delta(p(q+1)-q)/(q(p-1))}
   \\
   = & \sum^{N}_{n=1} a_nA^{-1/(p-1)-\delta(p(q+1)-q)/(q(p-1))}_n
   \left (\sum^{n}_{k=1}a^{1+(p-1)/\delta}_kA^{q/\delta}_k \right
   )^{\delta(p(q+1)-q)/(q(p-1))} \nonumber \\
   = & \sum^{N}_{n=1}S_n(T_n-T_{n-1}) \nonumber \\
   \leq & \delta \left (1+\frac {p}{q(p-1)}\right ) \sum^{N}_{n=1} S_n \left (\sum^{n}_{k=1}a^{1+(p-1)/\delta}_kA^{q/\delta}_k \right
   )^{\delta(p(q+1)-q)/(q(p-1))-1}a^{1+(p-1)/\delta}_nA^{q/\delta}_n \nonumber
   \\
   \leq & \delta \left (1+\frac {p}{q(p-1)}\right )\left (1+\frac
   {1}{1/(p-1)+\delta(1+p/(q(p-1))-1} \right) \nonumber \\
   & \cdot \sum^{N}_{n=1} \left (\sum^{n}_{k=1}a^{1+(p-1)/\delta}_kA^{q/\delta}_k \right
   )^{\delta(p(q+1)-q)/(q(p-1))-1}a^{1+(p-1)/\delta}_nA^{q/\delta+1-1/(p-1)-\delta(p(q+1)-q)/(q(p-1))}_n, \nonumber
\end{align}
   where for the first inequality in \eqref{4.2}, we have used the bound
\begin{align*}
   T_n-T_{n-1} \leq \delta \left (1+\frac {p}{q(p-1)}\right ) \left (\sum^{n}_{k=1}a^{1+(p-1)/\delta}_kA^{q/\delta}_k \right
   )^{\delta(p(q+1)-q)/(q(p-1))-1}a^{1+(p-1)/\delta}_nA^{q/\delta}_n,
\end{align*}
    by the mean value theorem and for the second inequality in \eqref{4.2}, we have used the bound
\begin{align*}
   S_n \leq \left (1+\frac
   {1}{1/(p-1)+\delta(1+p/(q(p-1))-1} \right)A^{1-1/(p-1)-\delta(p(q+1)-q)/(q(p-1))}_n,
\end{align*}
    by Lemma \ref{lem2}.

   Now, we set
\begin{align*}
   P=\frac {\delta(p(q+1)-q)}{\delta(p(q+1)-q)-q(p-1)}, \ Q=\frac
   {\delta(p(q+1)-q)}{q(p-1)},
\end{align*}
   so that $P, Q > 1$ and $1/P+1/Q=1$. We then have, by H\"older's inequality,
\begin{align}
\label{4.3}
   &  \sum^{N}_{n=1} \left (\sum^{n}_{k=1}a^{1+(p-1)/\delta}_kA^{q/\delta}_k \right
   )^{\delta(p(q+1)-q)/(q(p-1))-1}a^{1+(p-1)/\delta}_nA^{q/\delta+1-1/(p-1)-\delta(p(q+1)-q)/(q(p-1))}_n
   \\
   = & \sum^{N}_{n=1} \left (\sum^{n}_{k=1}a^{1+(p-1)/\delta}_kA^{q/\delta}_k \right
   )^{Q/P}a^{1/P}_nA^{-(1/(p-1)+Q)/P}_n\cdot a^{1/Q+(p-1)/\delta}_nA^{q/\delta+1-(1/(p-1)+Q)/Q}_n \nonumber \\
   \leq & \left (\sum^{N}_{n=1} a_nA^{-1/(p-1)-\delta(p(q+1)-q)/(q(p-1))}_n
   \left (\sum^{n}_{k=1}a^{1+(p-1)/\delta}_kA^{q/\delta}_k \right
   )^{\delta(p(q+1)-q)/(q(p-1))} \right )^{1/P} \nonumber \\
   & \cdot \left ( \sum^{N}_{n=1}\left (a^p_nA^q_n \right )^{1+1/q} \right
   )^{1/Q}. \nonumber
\end{align}
    It follows from inequalities \eqref{4.2} and \eqref{4.3} that
\begin{align*}
   & \sum^{N}_{n=1} a_nA^{-1/(p-1)}_n
   \left (\sum^{n}_{k=1}\frac {a_k}{A_n}a^{(p-1)/\delta}_kA^{q/\delta}_k \right
   )^{\delta(p(q+1)-q)/(q(p-1))}   \\
   \leq &  \left ( \delta \left (1+\frac {p}{q(p-1)}\right )\left (1+\frac
   {1}{1/(p-1)+\delta(1+p/(q(p-1))-1} \right) \right )^Q \\
   & \cdot \sum^{N}_{n=1}\left (a^p_nA^q_n \right )^{1+1/q} .
\end{align*}
   One sees easily that the above inequality also holds when $\delta=q(p-1)/(p(q+1)-q)$. Combining the above inequality with \eqref{4.1}, we see this
   establishes \eqref{auxbound} with
\begin{align*}
    K_1(p,q) = \min_{\delta} \left ( C \left(p,q, \delta \right )^{1/(1-\theta)}  \right
),
\end{align*}
    where $C(p,q,\delta)$ is defined as in \eqref{8} and the
minimum is taken over the $\delta$'s satisfying \eqref{9} and this
completes the proof of Lemma \ref{basiclem}.
\end{proof}

\begin{lemma}
\label{casep=1}
   Let $p = 1, q>0, r \geq 1$ be fixed. Under the same notions of Theorem \ref{thm1}, we
   have
\begin{align*}
   K(1,q,r) \leq {\displaystyle{ \left\{\begin{array}{ll}
   r^{r-1}K(1+(r-1)/q, q+r-1, 1), &  \ r \geq 2; \\
   r \left(K \left (1+(r-1)/q, q+r-1, 1 \right ) \right )^{r-1}, & \ 1 \leq r \leq 2. \\
\end{array}\right. }}
\end{align*}
\end{lemma}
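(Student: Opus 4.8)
The plan is to work throughout with the substitutions $b_n=a_nA_n^q$ and $c_n=\sum_{k=n}^\infty a_k^{1+1/q}$, so that (after the interchange of summation already used in the introduction) the left-hand side of \eqref{2} with $p=1$ is $\sum_n b_nc_n^r$ and the right-hand side sum is $\sum_n b_n^{1+r/q}$; as in the proof of Lemma \ref{basiclem} I may assume that only finitely many $a_n$ are positive, so that $c_n$ eventually vanishes and all boundary terms below disappear. The common first step, valid for every $r\geq 1$, is a single summation by parts: writing $b_n=\Gamma_n-\Gamma_{n-1}$ with $\Gamma_n=\sum_{k=1}^n b_k$ and $\Gamma_0=0$, one gets $\sum_n b_nc_n^r=\sum_n \Gamma_n(c_n^r-c_{n+1}^r)$. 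Here I would bound $\Gamma_n=\sum_{k\leq n}a_kA_k^q\leq A_n^q\sum_{k\leq n}a_k=A_n^{q+1}$, and use the convexity of $x\mapsto x^r$ (for $r\geq 1$) in the form $c_n^r-c_{n+1}^r\leq rc_n^{r-1}(c_n-c_{n+1})=rc_n^{r-1}a_n^{1+1/q}$. Since $A_n^{q+1}a_n^{1+1/q}=(a_nA_n^q)^{1+1/q}=b_n^{1+1/q}$, this yields the key reduction
\begin{equation*}
\sum_n b_nc_n^r\leq r\sum_n b_n^{1+1/q}c_n^{r-1}.
\end{equation*}

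Next I would note that each of the relevant sums is itself an instance of the left-hand side of \eqref{2}. For the triple $(1+(r-1)/q,\,q+r-1,\,1)$ one checks that the inner exponent is $1+1/q$, the outer weight is $a_n^{p}A_n^{q}=b_n^{1+(r-1)/q}$, and the right-hand side sum is exactly $\sum_n b_n^{1+r/q}$; hence \eqref{2} gives $\sum_n b_n^{1+(r-1)/q}c_n\leq K'\sum_n b_n^{1+r/q}$, where $K'=K(1+(r-1)/q,q+r-1,1)$. Likewise, for the triple $(1+1/q,\,q+1,\,r-1)$ the inner exponent is again $1+1/q$, the outer weight is $b_n^{1+1/q}$, and the right-hand side sum is once more $\sum_n b_n^{1+r/q}$, so that $\sum_n b_n^{1+1/q}c_n^{r-1}\leq K(1+1/q,q+1,r-1)\sum_n b_n^{1+r/q}$. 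For $1<r<2$ I would then finish by interpolation: H\"older's inequality with the conjugate pair $1/(r-1)$ and $1/(2-r)$ gives
\begin{equation*}
\sum_n b_n^{1+1/q}c_n^{r-1}\leq\Big(\sum_n b_n^{1+(r-1)/q}c_n\Big)^{r-1}\Big(\sum_n b_n^{1+r/q}\Big)^{2-r},
\end{equation*}
the exponent of $b_n$ collapsing to $1+1/q$ after a short computation; combining this with the first instance above and the key reduction yields $\sum_n b_nc_n^r\leq r(K')^{r-1}\sum_n b_n^{1+r/q}$, which is the claim (the endpoints $r=1,2$ being immediate).

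For $r\geq 2$ the key reduction together with the second instance above gives instead $K(1,q,r)\leq r\,K(1+1/q,q+1,r-1)$, and the main obstacle is that the reduced triple has $p=1+1/q>1$, so it cannot be fed back into the same argument nor iterated directly to reach $c_n^1$ (indeed, a naive second summation by parts leaves uncontrolled extra powers of $A_n$, since $\sum_{k\leq n}b_k^{\sigma}$ admits no clean bound for $\sigma>1$). I would resolve this by applying Lemma \ref{reductionlem} to the reduced triple, which is legitimate since there $p>1$ and $r-1\geq 1$. Computing the two parameter triples it produces, I expect to find precisely $(1+(r-1)/q,q+r-1,1)$ and $(1,q,r)$, with exponents $1/(r-1)$ and $(r-2)/(r-1)$ summing to $1$, so that
\begin{equation*}
K(1+1/q,q+1,r-1)\leq (K')^{1/(r-1)}\,K(1,q,r)^{(r-2)/(r-1)}.
\end{equation*}
Substituting this into the reduction produces the self-referential inequality $K(1,q,r)\leq r\,(K')^{1/(r-1)}K(1,q,r)^{(r-2)/(r-1)}$. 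Because $K(1,q,r)$ is a priori finite by Theorem \ref{thm1} and the exponent $(r-2)/(r-1)$ is strictly less than $1$, I can cancel the common power of $K(1,q,r)$ and raise to the power $r-1$, concluding $K(1,q,r)\leq r^{r-1}K'$. The delicate point is precisely this final cancellation, which is valid only thanks to the finiteness of $K(1,q,r)$.
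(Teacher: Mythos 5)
Your proof is correct, and for $1\leq r\leq 2$ it is the paper's proof: your partial-summation reduction $\sum_n b_nc_n^r\leq r\sum_n b_n^{1+1/q}c_n^{r-1}$ is exactly the paper's inequality \eqref{3.6}, and your H\"older interpolation with conjugate exponents $1/(r-1)$, $1/(2-r)$ is precisely what the paper invokes as Lemma \ref{reductionlem1} (inequality \eqref{prebound1}) applied to the reduced triple $(1+1/q,\,q+1,\,r-1)$, for which $b_n$ and $c_n$ are unchanged. For $r\geq 2$ the underlying algebra is again the same --- the H\"older factorization with exponents $1/(r-1)$ and $(r-2)/(r-1)$ followed by a self-referential cancellation --- but you execute it at the level of the constants: you pass to $K(1,q,r)\leq rK(1+1/q,q+1,r-1)$, feed the reduced triple into Lemma \ref{reductionlem} (your computation of the resulting triples and exponents is right), and then divide by $K(1,q,r)^{(r-2)/(r-1)}$, which requires the a priori finiteness of $K(1,q,r)$ supplied by Bennett's Theorem \ref{thm1}. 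The paper instead applies the same H\"older step (inequality \eqref{3.5}) directly to the finite sums, obtaining
\begin{equation*}
\sum^{N}_{n=1}b_nc_n^r\leq r\left(\sum^{N}_{n=1}b_n^{1+(r-1)/q}c_n\right)^{\frac{1}{r-1}}\left(\sum^{N}_{n=1}b_nc_n^r\right)^{\frac{r-2}{r-1}},
\end{equation*}
and cancels the last factor, which is finite simply because the sequence has finite support; only then is \eqref{2} applied once, to the triple $(1+(r-1)/q,\,q+r-1,\,1)$. Both routes yield the identical bound $r^{r-1}K'$, but the paper's version needs no appeal to Theorem \ref{thm1} for finiteness, which is a small advantage given the remark in Section \ref{sec 1} that Theorem \ref{thm1} is recoverable from the paper's results; your version remains legitimate under the paper's stated convention that the finiteness of $K(p,q,r)$ coming from Theorem \ref{thm1} may be used implicitly throughout.
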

\begin{proof}
    We may assume $a_n = 0$ whenever $n >
    N$. In this case, on setting
\begin{align*}
    b_n=a_nA^{q}_n, \ c_n=\sum^{N}_{k=n}a^{1+1/q}_k, \
    B_n=\sum^{n}_{k=1}b_k,
\end{align*}
    the left-hand side expression of \eqref{2} becomes
\begin{align*}
   \sum^{N}_{n=1}b_nc^r_n.
\end{align*}
    Note that as $r \geq 1$, we have the
    following bounds:
\begin{align*}
   B_n \leq A^{1+q}_n, \ c^{r}_n -
   c^{r}_{n+1} \leq rc^{r-1}_{n}a^{1+1/q}_n.
\end{align*}
   We then apply partial summation together with the bounds above to
   obtain (with $B_0=c_{N+1}=0$)
\begin{align}
\label{3.6}
 \sum^{N}_{n=1}b_n
   c^{r}_n=\sum^{N}_{n=1}(B_n-B_{n-1})c^{r}_n
   =  \sum^{N}_{n=1}B_n(c^{r}_n-c^{r}_{n+1}) \leq r\sum^{N}_{n=1}a^{1+1/q}_nA^{1+q}_nc^{r-1}_{n}.
\end{align}
    When $r \geq 2$, we apply inequality \eqref{3.5} to see that
\begin{align*}
  \sum^{N}_{n=1}a^{1+1/q}_nA^{1+q}_nc^{r-1}_{n} \leq \left ( \sum^{N}_{n=1}a^{1+(r-1)/q}_nA^{q+r-1}_n
\sum^{N}_{k=n}a_k^{1+1/q}  \right)^{\frac {1}{r-1}} \cdot \left (
\sum^{N}_{n=1}b_nc^{r}_n \right)^{\frac {r-2}{r-1}}.
\end{align*}
    Combining this with inequality \eqref{3.6}, we see that this
    implies that
\begin{align*}
    \sum^{N}_{n=1}b_n
   c^{r}_n \leq r^{r-1}\sum^{N}_{n=1}a^{1+(r-1)/q}_nA^{q+r-1}_n
\sum^{N}_{k=n}a_k^{1+1/q}.
\end{align*}
  The assertion of the lemma for $r \geq 2$ now follows on applying inequality
  \eqref{2} to the right-hand side expression above.

  When $1 \leq r \leq 2$, we apply inequality \eqref{prebound1} in \eqref{3.6} to see
  that
\begin{align*}
    K(1,q,r) \leq  r K \left (1+1/q, 1+q, r-1 \right ) \leq r \left(K \left (1+(r-1)/q,  q+r-1, 1 \right ) \right )^{r-1}.
\end{align*}
  The assertion of the lemma for $1 \leq r \leq 2$ now follows and this
  completes the proof.
\end{proof}

   Now, to establish Theorem \ref{thm2}, it suffices
   to apply Lemma \ref{reductionlem} with the observation that when $q+r-q/p \geq 2$,
   Lemma \ref{casep=1} implies that
\begin{align*}
   K \left (1, \frac {q}{p}, \frac {p(q+r)-q}{p} \right ) \leq \left ( \frac {p(q+r)-q}{p} \right
   )^{\frac {p(q+r-1)-q}{p}}K \left ( p\left (1+\frac {r-1}{q} \right ), q+r-1,
1 \right ),
\end{align*}
   while when $1 \leq q+r-q/p \leq 2$,
   Lemma \ref{casep=1} implies that
\begin{align*}
   K \left (1, \frac {q}{p}, \frac {p(q+r)-q}{p} \right ) \leq \left ( \frac {p(q+r)-q}{p} \right
   ) \left ( K \left ( p\left (1+\frac {r-1}{q} \right ), q+r-1,
1 \right ) \right )^{\frac {p(q+r-1)-q}{p}}.
\end{align*}
   The bound for $K(p,q,1)$ follows from Lemma \ref{basiclem} and this completes the proof of Theorem \ref{thm2}.

\section{Further Discussions}
\label{sec 4} \setcounter{equation}{0}
    We now look at inequality
    \eqref{2} in a different way. For this, we define for any non-negative sequence $(a_n)$ and any integers $N \geq n \geq
    1$,
\begin{align*}
    A_{n,N}=\sum^N_{k=n}a_k, \
    A_{n,\infty}=\sum^{\infty}_{k=n}a_k.
\end{align*}
    We then note that in order to establish inequality
    \eqref{2}, it suffices to show that for any integer $N \geq 1$, we have
\begin{equation*}
   \sum^{N}_{n=1}a^p_nA^q_n\left ( \sum^{N}_{k=n}a_k^{1+p/q} \right )^r \leq
  K(p,q,r)
   \sum^{N}_{n=1}\left (a^p_nA^q_n \right )^{1+r/q}.
\end{equation*}
    Upon a change of variables: $a_n \mapsto a_{N-n+1}$ and recasting, we
    see that the above inequality is equivalent to
\begin{equation*}
   \sum^{N}_{n=1}a^p_nA^q_{n, N} \left ( \sum^{n}_{k=1}a_k^{1+p/q} \right )^r \leq
  K(p,q,r)
   \sum^{N}_{n=1}\left (a^p_nA^q_{n, N} \right )^{1+r/q}.
\end{equation*}
    On letting $N \rightarrow \infty$, we see that inequality \eqref{2} is equivalent to the following inequality:
\begin{equation}
\label{6}
   \sum^{\infty}_{n=1}a^p_nA^q_{n, \infty}\left ( \sum^{n}_{k=1}a_k^{1+p/q} \right )^r \leq
   K(p,q,r)
   \sum^{\infty}_{n=1}\left (a^p_nA^q_{n, \infty} \right )^{1+r/q}.
\end{equation}
    Here $K(p, q, r)$ is also the best possible constant
   such that inequality \eqref{6} holds for any non-negative sequence $(a_n)$.

    We point out that one can give another proof of Theorem \ref{thm2} by
   studying \eqref{6} directly. As the general case $r \geq 1$ can be reduced to the case $r=1$ in a similar way as was done in the proof of
   Theorem \ref{thm2} in Section \ref{sec 3}, one only needs to establish the upper bound for
   $K(p,q,1)$ given in \eqref{7}. For this, one can use an
   approach similar to that taken in Section \ref{sec 3}, in
   replacing Lemma \ref{lem1} and Lemma \ref{lem2} by the
   following lemmas. Due to the similarity, we shall leave the
   details to the reader.
\begin{lemma}
\label{lem0} Let $d \geq c> 1$ and $(\lambda_n)$ be a positive
sequence with $\sum^{\infty}_{k=1}\lambda_k < \infty$. Let $
\Lambda^*_n=\sum^{\infty}_{k=n}\lambda_k$. Then for all
non-negative sequences $(x_n)$,
\begin{equation*}
  \sum^{\infty}_{n=1}\lambda_n(\Lambda^*_n)^{-c}\left ( \sum^{\infty}_{k=n}\lambda_kx_k\right )^d \leq
   \left (\frac {d}{c-1} \right)^d \sum^{\infty}_{n=1}\lambda_n(\Lambda^*_n)^{d-c}x^d_n.
\end{equation*}
    The constant is best possible.
\end{lemma}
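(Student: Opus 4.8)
The plan is to deduce the stated inequality from Copson's inequality (Lemma~\ref{lem1}) by reversing a finite truncation and then letting the truncation length tend to infinity. Write $S_n=\sum_{k=n}^{\infty}\lambda_k x_k$ for the inner tails, and note that the left-hand side here is built from the tail sums $\Lambda^*_n$ and the tails $S_n$, whereas Lemma~\ref{lem1} is phrased with partial sums; the natural device is therefore the index reversal $n\mapsto N+1-n$. For a fixed $N$ I set $\mu_j=\lambda_{N+1-j}$ and $y_j=x_{N+1-j}$ for $1\le j\le N$. A direct computation gives $M_j:=\sum_{i=1}^{j}\mu_i=\Lambda^*_{N+1-j}-\Lambda^*_{N+1}$ and $\sum_{i=1}^{j}\mu_i y_i=S_{N+1-j}-S_{N+1}$.

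Applying Lemma~\ref{lem1} to the positive sequence $(\mu_j)$ and the non-negative sequence $(y_j)$ (its hypotheses hold since $d\ge c>1$ and $\mu_1=\lambda_N>0$) and undoing the substitution via $n=N+1-j$ yields
\[
\sum_{n=1}^{N}\lambda_n\left(\Lambda^*_n-\Lambda^*_{N+1}\right)^{-c}\left(S_n-S_{N+1}\right)^{d}
\le\left(\frac{d}{c-1}\right)^{d}\sum_{n=1}^{N}\lambda_n\left(\Lambda^*_n-\Lambda^*_{N+1}\right)^{d-c}x_n^{d}.
\]
Since $d\ge c$, the exponent $d-c$ is non-negative, so $\left(\Lambda^*_n-\Lambda^*_{N+1}\right)^{d-c}\le(\Lambda^*_n)^{d-c}$, and hence the right-hand side is at most $\left(d/(c-1)\right)^{d}R$, where $R:=\sum_{n=1}^{\infty}\lambda_n(\Lambda^*_n)^{d-c}x_n^{d}$, uniformly in $N$.

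I then let $N\to\infty$. For each fixed $n$, the $n$-th summand on the left converges to $\lambda_n(\Lambda^*_n)^{-c}S_n^{d}$, because $\Lambda^*_{N+1}\to0$ (the tail of the convergent series $\sum\lambda_k$) and, as explained below, $S_{N+1}\to0$; Fatou's lemma for the counting measure then shows that the left-hand side $L$ of the lemma is at most $\liminf_N$ of the truncated left-hand sides, hence at most $\left(d/(c-1)\right)^{d}R$. The one genuine obstacle is the claim $S_{N+1}\to0$, i.e. $\sum_k\lambda_k x_k<\infty$. I dispose of the case $R=\infty$ trivially, and when $R<\infty$ I bound $\sum_k\lambda_k x_k$ by H\"older's inequality with exponents $d/(d-1)$ and $d$, which produces the factor $R^{1/d}$ times $\bigl(\sum_k\lambda_k(\Lambda^*_k)^{-s}\bigr)^{(d-1)/d}$ with $s=(d-c)/(d-1)$. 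The hypothesis $c>1$ is exactly what gives $s<1$, and comparing $\sum_k\lambda_k(\Lambda^*_k)^{-s}$ with $\int_{0}^{\Lambda^*_1}u^{-s}\,du$ (using $\Lambda^*_k-\Lambda^*_{k+1}=\lambda_k$ and the monotonicity of $u^{-s}$) shows this sum is finite; thus $\sum_k\lambda_k x_k<\infty$ and the limiting argument is justified.

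For the assertion that $\left(d/(c-1)\right)^{d}$ is best possible, I would exhibit a near-extremal family: choosing $(\lambda_n)$ so that $\Lambda^*_n$ decays like a fixed power and taking $x_n$ to be a suitable power of $\Lambda^*_n$ (the discrete analogue of the power extremizers of the corresponding sharp integral inequality) drives the ratio of the two sides to $\left(d/(c-1)\right)^{d}$. I would not carry out this routine computation in detail.
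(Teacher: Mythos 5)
The paper gives no proof of this lemma at all --- it is quoted as Corollary 6 to Theorem 2 of \cite{B1} --- so your argument is necessarily a different route: you derive the tail-sum form from the partial-sum form, Lemma~\ref{lem1}, by reversing indices over a finite window and passing to the limit. That reduction is correct: with $\mu_j=\lambda_{N+1-j}$, $y_j=x_{N+1-j}$ one indeed has $\sum_{i=1}^{j}\mu_i=\Lambda^*_{N+1-j}-\Lambda^*_{N+1}$, Lemma~\ref{lem1} applies because $\mu_1=\lambda_N>0$, and $d-c\geq 0$ lets you discard $\Lambda^*_{N+1}$ on the right-hand side. What this buys is a self-contained derivation (modulo Lemma~\ref{lem1}, which the paper also quotes rather than proves) in place of a citation. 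One simplification worth noting: your limiting argument is heavier than necessary. The truncated inner sum $\sum_{k=n}^{N}\lambda_kx_k$ increases to $\sum_{k=n}^{\infty}\lambda_kx_k$ by the very definition of an infinite series, so after also bounding $(\Lambda^*_n-\Lambda^*_{N+1})^{-c}\geq(\Lambda^*_n)^{-c}$ on the left, monotone convergence alone finishes the proof; the entire H\"older digression establishing $\sum_k\lambda_kx_k<\infty$ (needed only so that your difference notation $S_n-S_{N+1}$ and the Fatou step make sense) is correct but superfluous.

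The one real shortfall is the sharpness assertion, which is part of the statement and which you only gesture at. The difficulty you are waving away is that the exact power choice, $x_n=(\Lambda^*_n)^{(c-1-d)/d}$, makes \emph{both} sides of the inequality diverge, so no single sequence witnesses the constant; one must take a perturbed family, e.g. $x_n=(\Lambda^*_n)^{(c-1-d)/d+\epsilon}$ together with weights for which $\Lambda^*_n$ decays slowly enough that sums track the corresponding integrals, and let $\epsilon\to 0^+$. This computation is standard but not automatic, and as written that half of the lemma remains unproved in your proposal (as it does in the paper, which leans on \cite{B1} for it).
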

   The above lemma is Corollary 6 to Theorem 2 of
   \cite{B1} and only the special case $d=c$ is needed for the
   proof of Theorem \ref{thm2}.

\begin{lemma}
\label{lem1'} Let $p < 0 $. Let integers $M \geq N \geq 1$ be
fixed. For any positive sequence $(a_n)^M_{n=1}$ with $A_{n,
M}=\sum^{M}_{k=n}a_k$, we have
\begin{equation*}
  \sum^{N}_{k=1}a_kA^{p-1}_{k,M} \leq \left (1-\frac {1}{p} \right
  )A^p_{N,M}.
\end{equation*}
\end{lemma}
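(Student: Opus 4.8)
The plan is to adapt the proof of Lemma \ref{lem2} almost verbatim, reversing the direction of the telescoping to account for the fact that here the tail sums $A_{k,M}$ \emph{decrease} in $k$, since $A_{k,M}=a_k+A_{k+1,M}$.

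As in Lemma \ref{lem2}, I would start from the elementary convexity inequality $x^p-px+p-1\ge 0$, valid for all $x>0$ when $p<0$. The natural substitution is now $x=A_{k+1,M}/A_{k,M}$ for $1\le k\le M-1$ (note $0<x<1$). Multiplying through by $A_{k,M}^p$ gives
\begin{equation*}
  A_{k+1,M}^p-pA_{k+1,M}A_{k,M}^{p-1}+(p-1)A_{k,M}^p\ge 0.
\end{equation*}
Replacing $A_{k+1,M}=A_{k,M}-a_k$ in the middle term and simplifying, exactly as before, yields the one-step estimate
\begin{equation*}
  A_{k+1,M}^p-A_{k,M}^p\ge -pa_kA_{k,M}^{p-1},
  \qquad\text{i.e.}\qquad
  a_kA_{k,M}^{p-1}\le -\tfrac{1}{p}\bigl(A_{k+1,M}^p-A_{k,M}^p\bigr).
\end{equation*}

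Next I would sum this over $k=1,\dots,N-1$; the right-hand side telescopes to $-\tfrac1p\bigl(A_{N,M}^p-A_{1,M}^p\bigr)$. Since $p<0$, the term $\tfrac1p A_{1,M}^p$ is negative and may be discarded to obtain $\sum_{k=1}^{N-1}a_kA_{k,M}^{p-1}\le -\tfrac1p A_{N,M}^p$. Finally I would add the boundary term $k=N$ separately, using $a_N\le A_{N,M}$ (because $A_{N,M}=\sum_{k=N}^M a_k\ge a_N$) to get $a_N A_{N,M}^{p-1}\le A_{N,M}^p$. Combining the two contributions gives $\sum_{k=1}^N a_k A_{k,M}^{p-1}\le (1-1/p)A_{N,M}^p$, as required.

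There is no serious obstacle here: unlike Lemma \ref{lem2}, the sequence is finite (indices run only up to $M$), so no convergence of $A_{k,M}^p$ is needed and all telescoping sums are finite. The only points requiring care are getting the direction of the telescoping right---the surviving boundary index is now the \emph{largest} one, $k=N$, rather than the smallest---and verifying that the discarded term carries the correct (favourable) sign, which it does precisely because $p<0$.
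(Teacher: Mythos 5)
Your proof is correct, and it is precisely the argument the paper intends: the paper omits the proof of Lemma \ref{lem1'}, stating only that it parallels Lemma \ref{lem2}, and your adaptation---using $x^p-px+p-1\ge 0$ with $x=A_{k+1,M}/A_{k,M}$, telescoping over $k=1,\dots,N-1$, discarding the correctly-signed term $\tfrac1p A_{1,M}^p$, and handling the boundary index $k=N$ via $a_N\le A_{N,M}$---is exactly that parallel argument. No gaps; the sign checks and the direction of the telescoping are handled correctly.
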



\end{document}